\newtheorem{thm}{Theorem}[section]
\newtheorem{lem}[thm]{Lemma}
\newtheorem{prop}[thm]{Proposition}
\theoremstyle{definition}
\theoremstyle{remark}
\newtheorem{rem}[thm]{Remark}
\numberwithin{equation}{section}
\theoremstyle{remark}
\newtheorem{exam}[thm]{Example}
\newcommand{\mbb}{\mathbb}
\newcommand{\ra}{\rightarrow}
\newcommand{\z}{\zeta}
\newcommand{\pa}{\partial}
\newcommand{\ov}{\overline}
\newcommand{\sm}{\setminus}
\newcommand{\al}{\alpha}
\newcommand{\Om}{\Omega}
\newcommand{\cal}{\mathcal}
\begin{document}
\title{A study on holomorphic isometries of weighted Bergman metrics}
\keywords{weighted Bergman kernel, isometry}
\thanks{The first named author was supported in part by the PMRF Ph.D. fellowship of the Ministry of Education, Government of India.}
\subjclass{Primary: 32A25; Secondary: 32F45, 32D15 }
\author{Aakanksha Jain and Kaushal Verma}

\address{AJ: Department of Mathematics, Indian Institute of Science, Bangalore 560 012, India }
\email{aakankshaj@iisc.ac.in}

\address{KV: Department of Mathematics, Indian Institute of Science, Bangalore 560 012, India}
\email{kverma@iisc.ac.in}

\begin{abstract} 
For a domain $D \subset \mbb C^n$ and an admissible weight $\mu$ on it, we consider the weighted Bergman kernel $K_{D, \mu}$ and the corresponding weighted Bergman metric on $D$. In particular, motivated by work of Mok, Ng, Chan--Yuan and Chan--Xiao--Yuan among others, we study the nature of holomorphic isometries from the disc $\mbb D \subset \mbb C$ with respect to the weighted Bergman metrics arising from weights of the form $\mu = K_{\mbb D}^{-d}$ for some integer $d \ge 0$. These metrics provide a natural class of examples that give rise to positive conformal constants that have been considered in various recent works on isometries. Specific examples of isometries that are studied in detail include those in which the isometry takes values in $\mbb D^n$ and $\mbb D \times \mbb B^n$ where each factor admits a weighted Bergman metric as above for possibly different non-negative integers $d$. Finally, the case of isometries between polydisks in possibly different dimensions, in which each factor has a different weighted Bergman metric as above, is also presented.
\end{abstract}  

\maketitle 

\section{Introduction}

\noindent Let $D \subset \mbb C^n$ be a domain and $\mu$ a positive measurable function on it. Let $L^2_{\mu}(D)$ denote the space of all functions on $D$ that are square integrable with respect to $\mu dV$, where $dV$ denotes standard Lebesgue measure, and set $\mathcal O_{\mu}(D) = L^2_{\mu}(D) \cap \mathcal O(D)$. The class of weights $\mu$ for which $\mathcal O_{\mu}(D) \subset L^2_{\mu}(D)$ is closed, and for any $z \in D$, the point evaluations $z \mapsto f(z) \in \mbb C$ are bounded on $\mathcal O_{\mu}(D)$ were considered by Pasternak--Winiarski in \cite{PW2}, \cite{PW1} and termed {\it admissible} therein. In this situation, there is a reproducing kernel $K_{D, \mu}(z, w)$ which is the weighted Bergman kernel with weight $\mu$. Let $K_{D, \mu}(z) = K_{D, \mu}(z, z)$ for $z \in D$. A sufficient condition for $\mu$ to be admissible is that $\mu^{-a}$ be locally integrable in $D$ with respect to $dV$ for some $a >0$ (see \cite{PW1}). In particular, this holds if $1/\mu \in L^{\infty}_{\rm loc}(D)$. 

\medskip

Within our framework, we shall consider weights that extend continuously to $\overline{D}$ and are not necessarily positive on $\partial D$. This condition is not a restrictive one and will be seen to arise naturally in what follows. 

\medskip

As usual, the classical Bergman kernel and its restriction to the diagonal will be denoted by $K_D(z, w)$ and $K_D(z)$ respectively, and this corresponds to the case $\mu \equiv 1$.

\medskip

The purpose of this note is to study holomorphic maps from the disc $\mathbb D \subset \mathbb C$ that are isometries with respect to metrics arising from weighted Bergman kernels. Two classes of weights will be considered here - one, those that are non-vanishing on the boundary and two, a specific class of examples of weights that do vanish on the boundary.

\medskip

Let $D \subset \mbb C^n$ be a bounded domain and $\mu$ a positive continuous weight on $\ov D$. Then $L^2(D) = L^2_{\mu}(D)$ and therefore, $K_{D, \mu}$, which can be written as
\[
K_{D,\mu}(z) = \sup\{\vert f(z) \vert^2 : f \in \cal O_{\mu}(D), \Vert f \Vert_{\mu} = 1 \}
\]
is a positive plurisubharmonic function on $D$. The weighted Bergman metric 
\[
ds^2_{D, \mu} = \sum_{i, j} \frac{\pa^2}{\pa z_i \pa \ov z_j} \log K_{D, \mu}(z) \; dz_i \otimes d \ov z_j
\]
is, therefore, well-defined on $D$. When $\mu \equiv 1$, $ds^2_{D}$ will denote the classical (= unweighted) Bergman metric on $D$.

\begin{thm}\label{1.3}
Let $D_1 \subset \mbb C^n, D_2 \subset \mbb C^m$ be bounded domains and suppose that $\mu_1, \mu_2$ are positive continuous weights on $\ov D_1, \ov D_2$ respectively. Let $f : (D_1, ds^2_{D_1, \mu_1}) \ra (D_2, ds^2_{D_2, \mu_2})$
be a holomorphic isometry. Then, there exist functions $s_0 \in \mathcal O_{\mu_1}(D_1)$ and $r_0 \in \mathcal O_{\mu_2}(D_2)$ such that
\[
s_0(z) \; \ov{s_0(w)} \; K_{D_2, \mu_2}(f(z), f(w)) = r_0(f(z)) \; \ov{r_0(f(w))} \; K_{D_1, \mu_1}(z, w)
\]
for all $z, w \in D_1$. In particular, if $D_1 = D_2 = \mbb D$ and $\mu_1 \equiv 1$, then $f$ is an automorphism of $\mbb D$.

\end{thm}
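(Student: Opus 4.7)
The plan splits into two stages. First, I would establish the kernel identity by the following Calabi-style argument. The isometry hypothesis $f^* ds^2_{D_2, \mu_2} = ds^2_{D_1, \mu_1}$ is equivalent to $\pa \bar\pa (\log K_{D_2, \mu_2} \circ f) = \pa \bar\pa \log K_{D_1, \mu_1}$ on $D_1$, so the real-analytic function $\Phi(z) := \log K_{D_2, \mu_2}(f(z)) - \log K_{D_1, \mu_1}(z)$ is pluriharmonic on $D_1$. Fix $p \in D_1$, set $q = f(p)$, and choose a simply connected open neighborhood $U$ of $p$ on which $K_{D_1, \mu_1}(z, p)$ does not vanish (possible since $K_{D_1, \mu_1}(p) > 0$). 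On $U$, one writes $\Phi = \log |h|^2$ for some nowhere-zero holomorphic $h$; equivalently, $K_{D_2, \mu_2}(f(z)) = |h(z)|^2 K_{D_1, \mu_1}(z)$.

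Next, I polarize: the diagonal identity extends uniquely, by the standard polarization for sesqui-holomorphic functions, to $K_{D_2, \mu_2}(f(z), f(w)) = h(z) \ov{h(w)} K_{D_1, \mu_1}(z, w)$ on $U \times U$. Setting $w = p$ and solving gives $h(z) = K_{D_2, \mu_2}(f(z), q)/(\ov{h(p)}\, K_{D_1, \mu_1}(z, p))$, which motivates
\[
s_0(z) := |h(p)|\, K_{D_1, \mu_1}(z, p), \qquad r_0(w) := K_{D_2, \mu_2}(w, q).
\]
These are reproducing-kernel sections, hence automatically $s_0 \in \cal O_{\mu_1}(D_1)$ and $r_0 \in \cal O_{\mu_2}(D_2)$. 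A direct substitution then yields $h(z) \ov{h(w)} = r_0(f(z)) \ov{r_0(f(w))}/(s_0(z) \ov{s_0(w)})$ on $U \times U$, and hence the asserted identity
\[
s_0(z) \ov{s_0(w)} K_{D_2, \mu_2}(f(z), f(w)) = r_0(f(z)) \ov{r_0(f(w))} K_{D_1, \mu_1}(z, w)
\]
holds on $U \times U$. Both sides are holomorphic in $z$ and antiholomorphic in $w$ on the connected set $D_1 \times D_1$, so the identity theorem propagates the equality from $U \times U$ to all of $D_1 \times D_1$.

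For the ``in particular'' assertion, specialize to $D_1 = D_2 = \mbb D$ and $\mu_1 \equiv 1$; the source metric $ds^2_{\mbb D}$ is then the classical Bergman metric on $\mbb D$, which is complete. In this 1-dimensional setting the isometry condition reads $|f'(z)|^2\, g_{\mbb D, \mu_2}(f(z)) = g_{\mbb D}(z) > 0$, where $g_{D, \mu} := \pa^2 \log K_{D, \mu}/(\pa z\, \pa \ov z)$ is the metric coefficient. In particular $f'$ is nowhere zero, so $f$ is a local biholomorphism of $\mbb D$. Applying the standard Riemannian-geometric fact that a local isometry between Riemannian manifolds of the same dimension, with complete domain and connected target, is a covering map, $f : \mbb D \to \mbb D$ is a covering. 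Since the target $\mbb D$ is simply connected, the covering has one sheet and $f$ is a biholomorphism, i.e., $f \in \mathrm{Aut}(\mbb D)$.

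The main obstacle is the global step in the first stage: as $D_1$ is not assumed simply connected, the pluriharmonic $\Phi$ need not admit any single-valued holomorphic $h$ on all of $D_1$ with $\Phi = \log |h|^2$. My workaround is to absorb the potential multi-valuedness of $h$ into the always single-valued reproducing-kernel sections $s_0, r_0$, and then invoke the identity theorem for sesqui-holomorphic functions on the connected set $D_1 \times D_1$ to globalize the local identity.
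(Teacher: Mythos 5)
Your proof is correct. For the kernel identity you follow the paper's broad outline (pluriharmonicity of $\log K_{D_2,\mu_2}\circ f-\log K_{D_1,\mu_1}$, a local nonvanishing holomorphic multiplier $h$, polarization, and the identity theorem for sesqui-holomorphic functions on $D_1\times D_1$), but you identify the multiplier differently and more economically: the paper chooses orthonormal bases of $\mathcal O_{\mu_i}(D_i)$ adapted to the base points, factors $K_{D_1,\mu_1}=|s_0|^2\tilde K_{D_1,\mu_1}$, and runs a pure-type Taylor coefficient analysis to pin down the multiplicative constant before polarizing, whereas you polarize first and then read off $h(z)=K_{D_2,\mu_2}(f(z),q)/(\overline{h(p)}\,K_{D_1,\mu_1}(z,p))$ by setting $w=p$. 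This makes $s_0$ and $r_0$ explicit reproducing-kernel sections, which agree up to constants with the paper's choices (the distinguished element of an adapted orthonormal basis is proportional to $K(\cdot,p)$), so nothing is lost. For the ``in particular'' clause your route is genuinely different: the paper uses the identity $K_{\mathbb D,\mu_2}(f(z))=\pi|r_0(f(z))|^2K_{\mathbb D}(z)$ to show that $f$ is proper onto $\mathbb D\setminus\{r_0=0\}$ and then a cluster-set connectivity argument to upgrade this to a proper, unbranched self-map of $\mathbb D$; you instead invoke completeness of the classical Bergman (twice the Poincar\'e) metric on the source and the standard fact that a local isometry from a complete Riemannian manifold to a connected one is a covering map. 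Your argument is shorter and does not need the kernel identity at all for this step, at the price of importing a Riemannian-geometric theorem; the paper's argument stays within one-variable function theory and, as its subsequent remark exploits, also yields that $f$ is proper onto $D_2\setminus\{r_0=0\}$ for general bounded targets $D_2$.
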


Some comments are in order. Theorem \ref{1.3} and its proof are directly motivated by Mok's work (see \cite{Mok1}) on the extension of germs of local isometries of the Bergman metric. The functional relation between the weighted Bergman kernels does not require $f$ to be defined on all of $D_1$. In fact, fix base points $a, b$ in $D_1, D_2$ respectively and suppose that $f$ is a germ of a holomorphic isometry defined near $a$ such that $f(a) = b$. The proof shows that the same functional equation holds for $z, w$ near $a$. However, given such a germ of a holomorphic isometry, we do not know whether this germ extends to all of $D_1$ in the spirit of \cite{Mok1}. 

\medskip

For a  biholomorphism $f: D_1 \ra D_2$ and an admissible weight $\mu$ on $D_1$, the weight $\mu\circ f^{-1}$ on $D_2$ is also admissible. Therefore, $ K_{D_2, \mu \circ f^{-1}}$ is well-defined. The weighted kernels transform as
\begin{equation}\label{weighted_trans}
K_{D_1, \mu}(z, w) = Jf(z) K_{D_2, \mu \circ f^{-1}} (f(z), f(w)) \ov{Jf(w)}.
\end{equation}
Here, $Jf$ is the Jacobian determinant of $f$. Hence, $f$ is an isometry with respect to the metrics corresponding to these weighted kernels. Theorem \ref{1.3} talks about isometries, where the weights need not vary with the holomorphic function.

\medskip

Moving ahead, we now consider a smoothly bounded planar domain $D \subset \mbb C$ and weights of the form $\mu_d(z) = K_D^{-d}(z)$ for some integer $d \ge 0$. The Bergman kernel on the diagonal for such domains blows up at the boundary and hence the weights $\mu_d$ can be regarded to extend continuously to the boundary $\pa D$ by setting $\mu_d \equiv 0$ on $\pa D$. Further, they are admissible since $1/\mu_d = K_D^d$ is locally bounded on $D$. The corresponding weighted Bergman kernel will be denoted by $K_{D, d}$. The intrinsic advantage of working with $\mu_d$ as a weight is that $K_{D, d}$ transforms much like the unweighted kernel under biholomorphisms. It can be checked that 
\begin{equation}\label{Trans formula}
K_{D_1, d}(z, w) = (Jf(z))^{d+1} K_{D_2, d}(f(z), f(w)) (\ov{Jf(w)})^{d+1}
\end{equation}
for a biholomorphism $f : D_1 \ra D_2$ -- this holds for domains in $\mbb C^n$ also. Since $\mu_d = 0$ on $\pa D$, $L^2(D) \subset L^2_{\mu_d}(D)$ which implies that $K_{D, d}(z) > 0$ everywhere and hence the corresponding metric $ds^2_{D, d}$ is well-defined on $D$. The transformation rule (\ref{Trans formula}) shows that biholomorphisms are isometries of the weighted metrics $ds^2_{D, d}$. As before, one could ask for an understanding of the isometries with respect to these weighted metrics. We, therefore, turn to the question of studying isometries of the metrics $ds^2_{D, d}$. We will focus on the case when $D = \mbb D$ and write $ds^2_d$, instead of $ds^2_{\mbb D, d}$, for brevity. The conclusions are computationally complete here and moreover, they can be regarded as weighted analogs of the work of S-C Ng \cite{Ng} and others. The point here is $ds^2_{\mbb D, d}$ provides a natural class of metrics that give rise to positive conformal constants that have been considered in various other works on isometries.

\begin{thm}\label{1.5}
Let $d, d_1, d_2, \ldots, d_n$ be non-negative integers and suppose that
\[
f : (\mbb D, ds^2_d) \ra (\mbb D, ds^2_{d_1}) \times (\mbb D, ds^2_{d_2}) \times \cdots \times (\mbb D, ds^2_{d_n})
\]
is a holomorphic isometry such that all the component functions of $f$ are non-constant. Then,
\[
d+1 \le \sum_{i = 1}^n (d_i+1).
\]
In case equality holds, there are unimodular constants $\al_1, \al_2, \ldots, \al_n$ such that
\[
f(z) = (\al_1 z, \al_2 z, \ldots, \al_n z)
\]
after a possible reparametrization. Moreover, for every $1 \le i \le n$, there exist integers $1 \le i = k_1, k_2, \ldots, k_{j_i} \le n$ such that
\[
\sum_{r=1}^{j_i} (d_{k_r} + 1) = d+1.
\]
In particular, $d_i \le d$ for each $1 \le i \le n$.
\end{thm}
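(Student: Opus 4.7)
The plan is to convert Theorem \ref{1.3} into an explicit functional equation for the components of $f$, and then to extract each assertion in turn. Using $K_{\mbb D,d}(z,w) = c_d(1-z\bar w)^{-2(d+1)}$ together with the product factorization of the target kernel, Theorem \ref{1.3} gives
\[
\frac{s_0(z)\,\ov{s_0(w)}}{r_0(f(z))\,\ov{r_0(f(w))}} \prod_{i=1}^n (1 - f_i(z)\ov{f_i(w)})^{-2(d_i+1)} = C\,(1-z\bar w)^{-2(d+1)}
\]
for some $C > 0$. Post-composing each component of $f$ with an automorphism of $\mbb D$, which is an isometry of $ds^2_{d_i}$ by (\ref{Trans formula}), I normalize $f_i(0) = 0$ for every $i$. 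Setting $w = 0$ in the displayed identity shows that $s_0/(r_0 \circ f)$ is constant, and taking a square root whose phase is pinned down by the values at $z = w = 0$ produces the key functional equation
\[
\prod_{i=1}^n (1-f_i(z)\ov{f_i(w)})^{d_i+1} = (1-z\bar w)^{d+1}. \quad (\star)
\]

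Specializing $(\star)$ to the diagonal $w = z$ and applying $\pa\bar\pa\log$ (using $\pa\bar\pa\log(1-|g|^2) = -|g'|^2/(1-|g|^2)^2$ for holomorphic $g$) recovers the infinitesimal isometry relation
\[
\sum_{i=1}^n (d_i+1)\,\phi_i(z)^2 = d+1, \qquad \phi_i(z) := \frac{(1-|z|^2)\,|f_i'(z)|}{1-|f_i(z)|^2}.
\]
Since the Schwarz--Pick lemma forces $\phi_i \le 1$, the inequality $d+1 \le \sum_i (d_i+1)$ is immediate. In the equality case one has $\phi_i \equiv 1$ for every $i$; the rigidity part of Schwarz--Pick makes each $f_i$ an automorphism of $\mbb D$, and combined with the normalization $f_i(0)=0$ this gives $f_i(z) = \al_i z$ with $|\al_i| = 1$.

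For the \emph{moreover} assertion my plan is to take a nontangential boundary limit $z \to \z \in \pa\mbb D$ in the diagonal identity $\prod_i (1-|f_i(z)|^2)^{d_i+1} = (1-|z|^2)^{d+1}$. Boundary values $f_i(\z)$ exist almost everywhere by Fatou, and by Julia--Carath\'eodory the factor $(1-|f_i(z)|^2)$ vanishes to first order in $(1-|z|^2)$ precisely when $|f_i(\z)| = 1$, while it tends to a positive constant otherwise. Setting $S(\z) := \{i : |f_i(\z)| = 1\}$ and comparing orders of vanishing on the two sides yields
\[
\sum_{i \in S(\z)}(d_i+1) = d+1 \quad \text{for a.e.\ } \z \in \pa\mbb D.
\]
The subset-sum statement for an index $i$, and hence the bound $d_i \le d$, then follows once $\{\z : |f_i(\z)| = 1\}$ is shown to have positive measure for every $i$, so that $i$ lies in $S(\z)$ for some $\z$; in other words, the hard step is to show that each non-constant component $f_i$ is an inner function. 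My plan for this last point is to combine the F.\ and M.\ Riesz theorem with a finer analysis of $(\star)$ off the diagonal: if some $f_i$ had $|f_i| < 1$ on a positive-measure subset of $\pa\mbb D$, then the remaining factors in $(\star)$ would be forced to absorb the full vanishing order $d+1$ uniformly there, and I expect this to be incompatible with $f_i$ being non-constant. This boundary analysis of $(\star)$ is where the main technical difficulty lies.
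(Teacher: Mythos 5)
Your derivation of the functional equation $(\star)$, the inequality $d+1\le\sum_i(d_i+1)$, and the rigidity in the equality case is correct and essentially the paper's route: the paper obtains $\prod_i(1-|f_i|^2)^{d_i+1}=(1-|z|^2)^{d+1}$ by the same Mok-type pluriharmonicity/no-pure-terms argument (rather than by invoking Theorem \ref{1.3}, whose hypotheses as stated require weights positive on the closed domain, which $\mu_d=K_{\mathbb D}^{-d}$ is not; the remark following that theorem covers this, but you should say so explicitly), and then compares the coefficient of $|z|^2$, which is exactly the value at $z=0$ of your pointwise identity $\sum_i(d_i+1)\phi_i^2=d+1$.

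The \emph{moreover} clause is where your proposal has a genuine gap, and you have correctly located it yourself: you must show that every index $i$ lies in $S(\zeta)$ for some $\zeta\in\partial\mathbb D$, and the F. and M. Riesz / ``absorb the vanishing order'' heuristic is not a proof. There are two problems. First, even the a.e.\ identity $\sum_{i\in S(\zeta)}(d_i+1)=d+1$ is delicate: Julia--Carath\'eodory gives only $\liminf(1-|f_i(z)|)/(1-|z|)>0$, so without finiteness of the angular derivatives you get only $\sum_{i\in S(\zeta)}(d_i+1)\ge d+1$. Second, and more seriously, nothing in the boundary analysis forces a given non-constant $f_i$ to have modulus tending to $1$ anywhere on $\partial\mathbb D$. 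The paper supplies the missing structure by an algebraicity argument (Proposition \ref{extension of graph}, following Ng): rearranging $(\star)$ into an identity $\sum|A_j|^2=\sum|B_j|^2$ and applying a unitary identification of the two sides shows that the graph of $f$ extends to an irreducible projective curve $V\subset\mathbb P^1\times(\mathbb P^1)^n$, finitely branched over $\mathbb P^1$ with all branch points on $\partial\mathbb D$, each projection $V_i$ being cut out by a polynomial invariant under $(z,h)\mapsto(1/\bar z,1/\bar h)$. This guarantees that each $f_i$ takes the value $\infty$ at $z=\infty$ on exactly one of its $s_i$ distinct branches, and the polarized equation $\prod_i(1-a_i\bar b_i)^{d_i+1}=(1-z\bar w)^{d+1}$ on $V$ converts your ``comparison of vanishing orders'' into an exact multiplicity count at $z=\infty$: on each branch the multiplicities $2d_{k_r}+2$ of the components blowing up sum to $2d+2$. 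Since every $f_i$ blows up on at least one branch, each $i$ belongs to such a set $\{k_1,\dots,k_{j_i}\}$, which is the subset-sum identity and yields $d_i\le d$. Without this algebraic extension (or some substitute showing each $f_i$ has unimodular boundary values off a finite set), your boundary program cannot be completed as written.
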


A complete classification of holomorphic isometries from the disc into the bidisc can be found in \cite{Ng} and its generalization to the case with positive conformal constants in \cite{CXY}. The case of holomorphic isometries into the tridisc has been explored in \cite{CY, Ng}. The next theorem clarifies the situation of isometries into the polydisc in $\mbb C^3$ by giving the complete characterization.

\begin{thm}\label{1.6}
Let $n = 3$ in Theorem \ref{1.5}, that is, let
\[
f : (\mbb D, ds^2_d) \ra (\mbb D, ds^2_{d_1}) \times (\mbb D, ds^2_{d_2}) \times (\mbb D, ds^2_{d_3})
\]
be a holomorphic isometry such that all the component functions of $f$ are non-constant. A complete characterization of all possible isometries $f$ is as follows:  
\begin{enumerate}
\item[(a)] $d_1=d_2=d_3=d$. In this case, up to a reparametrization, $f$ is either the cube root embedding or $f$ can be factorized as 
\[
f(z)=\left(\alpha_1(z),\alpha_2(\beta_1(z)),\beta_2(\beta_1(z))\right)
\]
where $z\mapsto (\alpha_i(z),\beta_i(z))$ is a holomorphic isometry from $(\mathbb{D},ds^2_{\mathbb{D}})$ to $(\mathbb{D},ds^2_{\mathbb{D}})^2$ for both $i=1,2$.
\item[(b)] $\sum\limits_{i=1}^{3}(d_i+1)=d+1$. In this case, up to a reparametrization,
\[
f(z)=(\alpha_1 z, \alpha_2 z,\alpha_3 z)
\]
where $\alpha_i\in\mathbb{C}$ are unimodular constants.
\item[(c)] $d_{k_1}=d$ and $\sum\limits_{i=2,3}(d_{k_i}+1)=d+1$, where $k_i\neq k_j$, $i\neq j$ and $k_i\in\{1,2,3\}$. In this case, up to a reparametrization,
\[
f(z)=(c\,\alpha(z),\alpha(z),\beta(z))
\]
where $c\in\mathbb{C}$ is a unimodular constant and $z\mapsto(\alpha(z),\beta(z))$ is a holomorphic isometry from $(\mathbb{D},ds^2_{\mathbb{D}})$ to $(\mathbb{D},ds^2_{\mbb D})^2$.
\item[(d)] $\sum\limits_{i=1,3}(d_{k_i}+1)=d+1=\sum\limits_{i=2,3}(d_{k_i}+1)$, where $k_i\neq k_j$, $i\neq j$ and $k_i\in\{1,2,3\}$. In this case, up to a reparametrization,
\[
f(z)=(z,\alpha(z),\beta(z))
\]
where $z\mapsto (\alpha(z),\beta(z))$ is a holomorphic isometry from $(\mathbb{D},ds^2_{\mbb D})$ to $(\mathbb{D},ds^2_{\mbb D})^2$.
\end{enumerate}
\end{thm}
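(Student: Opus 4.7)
The plan is to apply Theorem \ref{1.5} to reduce to an arithmetic case analysis and then use the kernel functional equation of Theorem \ref{1.3} to pin down the form of $f$ in each case. Applying Theorem \ref{1.5} with $n = 3$, for each $i \in \{1,2,3\}$ there is a subset $S_i \ni i$ of $\{1,2,3\}$ with $\sum_{k \in S_i}(d_k + 1) = d + 1$; call such a subset \emph{admissible}. Since $d_k + 1 \ge 1$, an admissible subset is either $\{1,2,3\}$ itself (in which case no proper subset can be admissible) or a singleton or a pair. The admissible subsets must cover $\{1,2,3\}$, and enumerating the possible covers produces exactly the four patterns: only $\{1,2,3\}$ admissible is case (b); three admissible singletons is case (a), forcing $d_1 = d_2 = d_3 = d$; one admissible singleton together with one admissible pair is case (c); two (or three) admissible pairs is case (d), forcing $d_{k_1} = d_{k_2}$. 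Case (b) is immediate from the equality part of Theorem \ref{1.5}. For case (a), dividing the weighted isometry equation by $d + 1$ yields the unweighted identity $\sum_{i=1}^{3} f_i^{\,*} ds^2_{\mbb D} = ds^2_{\mbb D}$, whence Ng's classification of holomorphic isometries $\mbb D \ra \mbb D^3$ (\cite{Ng}) produces either the cube-root embedding or the stated bidisc factorisation.

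For cases (c) and (d), substituting $K_{\mbb D, d}(z, w) = c_d (1 - z \ov w)^{-2(d+1)}$ into Theorem \ref{1.3}, reparametrising so that $f(0) = 0$, and simplifying (by evaluating at $w = 0$ to eliminate $s_0, r_0$ and extracting a square root via analytic continuation) reduces the kernel identity to the polynomial identity
\[
(1 - z \ov w)^{d+1} \;=\; \prod_{i=1}^3 \bigl(1 - f_i(z) \ov{f_i(w)}\bigr)^{d_i + 1}.
\]
In case (c), after relabelling so that $d_3 = d$, this rearranges as $\Phi^{d+1} = G_1^{d_1 + 1} G_2^{d_2 + 1}$, where $\Phi = (1 - z \ov w)/(1 - f_3(z) \ov{f_3(w)})$, $G_i = 1 - f_i(z) \ov{f_i(w)}$, and $d + 1 = (d_1 + 1) + (d_2 + 1)$. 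A factorisation argument then forces $G_1 = G_2$, so $f_1 = c f_2$ for some unimodular $c$, and the residual identity $(1 - z \ov w) = G_2 \, (1 - f_3(z) \ov{f_3(w)})$ identifies $(f_2, f_3)$ as an unweighted disc-to-bidisc isometry, yielding the claimed form with $\al = f_2$ and $\be = f_3$. Case (d) is parallel: relabelling so that the shared index is $1$ and $d_2 = d_3$, the identity becomes
\[
(1 - z \ov w)^{d+1} \;=\; (1 - f_1 \ov{f_1})^{d_1 + 1}\bigl[(1 - f_2 \ov{f_2})(1 - f_3 \ov{f_3})\bigr]^{d_2 + 1}
\]
with $d + 1 = (d_1 + 1) + (d_2 + 1)$; an analogous factorisation forces $f_1$ to be an automorphism of the disc---hence $z$ after a further reparametrisation---leaving $(f_2, f_3)$ as an unweighted bidisc isometry.

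The hardest step is the factorisation argument producing $G_1 = G_2$ in case (c) and $f_1 = z$ in case (d). Both hinge on the exponent relation $(d_{k_1} + 1) + (d_{k_3} + 1) = d + 1$ (or its case-(c) analog), which is what makes the decomposition rigid; without it the polynomial identity would admit a much richer family of solutions. Carrying out the factorisation relies on the normalisation $f(0) = 0$ (so that $\Phi$, the $G_i$, and their analogs all equal $1$ along $\{z = 0\}$ and $\{w = 0\}$), and in case (d) additionally on the interchangeable role of $f_{k_1}, f_{k_2}$ to determine which component is the automorphism. This factorisation is the technical heart of the proof.
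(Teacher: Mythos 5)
Your arithmetic enumeration of the four cases via admissible subsets is correct, and it is arguably a cleaner organization than the paper's, which instead partitions according to the intersection pattern of the boundary sets $A_i=\{b\in\partial\mbb D: b \text{ not a branch point of } f,\ |f_i(z)|\to 1 \text{ as } z\to b\}$ and recovers the same arithmetic by comparing vanishing orders of $1-f_i(z)\ov{f_i(b)}$ at boundary points. Cases (a) and (b) are handled essentially as in the paper: (b) is the equality case of Theorem \ref{1.5}, and in (a) the $(d+1)$-th root of the functional equation reduces everything to Ng's unweighted classification. (Two secondary remarks: Theorem \ref{1.3} as stated requires weights positive on the closure, which $\mu_d$ is not — the paper instead derives the polarized identity through the algebraic extension $V$ of the graph; and you do not address the converse direction, i.e., which maps of the stated forms actually occur, which the paper settles with explicit examples built from the square-root embedding.)

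The genuine gap is in cases (c) and (d): the ``factorisation argument'' that is supposed to force $G_1=G_2$ (resp.\ force $f_1$ to be an automorphism) is asserted but never given, and it cannot be carried out by formal manipulation of $\Phi^{d+1}=G_1^{d_1+1}G_2^{d_2+1}$. The functions $G_i=1-f_i(z)\ov{f_i(w)}$ do not live in a ring where such an exponent identity forces equality of factors; already the unweighted sub-case $d_1=d_2=d_3=0$, $d=1$ of your case (d) is exactly Ng's tridisc theorem with isometric constant $2$, which is proved with substantial analytic machinery, not by factorisation. Concretely, the paper's proof of (d) (its Case 5) shows that $|f_3|\to 1$ at every non-branch boundary point, concludes $f_3$ is proper, hence a finite Blaschke product with no poles in $\mbb C$, hence $cz^k$, and then uses the degree of the rational inverse $R_3$ supplied by the algebraic curve $V_3$ to force $k=1$; only then does division of the functional equation exhibit $(f_1,f_2)$ as an unweighted bidisc isometry. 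The proof of (c) (its Case 6) is harder still: it counts, over the $s$ branches of $V$, how many branches of each $f_i$ take the value $\infty$ at $\infty$, deduces $1=1/s_1+1/s_3$ and hence $s_1=s_2=s_3=2$, invokes Ng's description of $2$-sheeted components to write $f_3=\beta$ for a square-root embedding $(\alpha,\beta)$, and then needs the germ-extension result of the Appendix to apply the equality case of Theorem \ref{1.5} to $(f_1\circ\alpha^{-1},f_2\circ\alpha^{-1})$. None of this — boundary continuity of $f$, openness of the $A_i$, properness, sheeting numbers, germ extension — is present in, or replaceable by, the proposed polynomial factorisation, so as written the proposal establishes only (a) and (b).
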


The concluding statements below are motivated by Chan--Yuan \cite{CY} and Ng \cite{Ng} respectively.

\begin{thm}\label{1.7}
Let $d, d_1, d_2$ be non-negative integers and
\[
f : (\mbb D, ds^2_d) \ra (\mbb D, ds^2_{d_1}) \times (\mbb B^n, ds^2_{d_2})
\]
be a holomorphic isometry. Then,
\[
2(d+1) \le 2(d_1 + 1) + (n+1)(d_2 + 1)
\]
and equality holds precisely when, up to reparametrization, $f(z) = (cz, c_1z, \ldots, c_nz)$ where $\vert c \vert = 1$ and the vector $(c_1, c_2, \ldots, c_n) \in \pa \mbb B^n$.
\end{thm}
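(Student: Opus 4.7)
The plan is to reduce the theorem to a pointwise application of the Schwarz lemma on the disc and on the ball, after an initial normalization. By the transformation rule (\ref{Trans formula}), automorphisms of $\mbb D$ and of $\mbb B^n$ are isometries of $ds^2_{\mbb D, \bullet}$ and $ds^2_{\mbb B^n, \bullet}$ respectively; post-composing $f$ with a suitable product automorphism of $\mbb D \times \mbb B^n$, I would arrange that $f(0) = (0, 0)$, and write $f = (f_0, g)$ with $f_0 : \mbb D \to \mbb D$ and $g : \mbb D \to \mbb B^n$ both vanishing at the origin.

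Next, I would record the diagonal values of the weighted Bergman kernels. Using the classical formula for the Bergman kernel on the ball with radial weight $(1-|z|^2)^{\al}$, the weights $\mu_d = K_{\mbb D}^{-d}$ and $\mu_{d_2} = K_{\mbb B^n}^{-d_2}$ yield
\[
K_{\mbb D, d}(z) = c_d(1-|z|^2)^{-2(d+1)}, \qquad K_{\mbb B^n, d_2}(z) = c_{n, d_2}(1-|z|^2)^{-(n+1)(d_2+1)}
\]
for positive constants $c_d, c_{n, d_2}$. Taking $\pa \ov{\pa}\log$ and evaluating at $0$ gives
\[
ds^2_{\mbb D, d}(0) = 2(d+1)\, dz \otimes d\ov{z}, \qquad ds^2_{\mbb B^n, d_2}(0) = (n+1)(d_2+1)\sum_{i=1}^{n} dz_i \otimes d\ov{z}_i,
\]
so the isometry condition applied to the tangent $\pa / \pa z$ at $0$ reduces to the scalar identity
\[
2(d+1) = 2(d_1+1)|f_0'(0)|^2 + (n+1)(d_2+1)|g'(0)|^2.
\]

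The inequality in the theorem is now immediate from the Schwarz bounds $|f_0'(0)| \le 1$ and $|g'(0)| \le 1$, the latter being the standard Schwarz lemma for holomorphic maps $\mbb D \to \mbb B^n$ fixing the origin (equivalently, the infinitesimal form of the distance-decreasing property of the Kobayashi pseudometric, which also yields the global bound $|g(z)| \le |z|$ on $\mbb D$). For the equality case, the scalar identity forces $|f_0'(0)| = |g'(0)| = 1$; the classical Schwarz lemma gives $f_0(z) = cz$ with $|c| = 1$, and to pin down $g$ I would decompose $g(z) = z\, g'(0) + h(z)$ with $h(z)$ orthogonal to $g'(0)$ and apply $|g(z)|^2 \le |z|^2$ to conclude $|h(z)|^2 \le 0$, so that $h \equiv 0$ and $g(z) = (c_1 z, \ldots, c_n z)$ with $(c_1, \ldots, c_n) = g'(0) \in \pa \mbb B^n$. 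That such a linear $f$ is in turn an isometry is a routine direct verification.

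The most delicate step in the plan is this last one: translating the scalar extremality $|g'(0)| = 1$ into the full vectorial rigidity for $g$. A one-variable Schwarz argument alone only fixes the component of $g$ in the direction of $g'(0)$, and one must combine it with the global ball bound $|g(z)| \le |z|$ in order to kill the orthogonal components and conclude that $g$ is genuinely linear of the form $(c_1 z, \ldots, c_n z)$.
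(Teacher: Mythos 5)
Your argument is correct and is essentially the paper's proof: after normalizing $f(0)=0$, both arguments reduce to the single identity $2(d+1)=2(d_1+1)\vert f_0'(0)\vert^2+(n+1)(d_2+1)\sum_{i}\vert g_i'(0)\vert^2$ and finish with the Schwarz lemma on the disc and on the ball. The only difference is in how that identity is obtained: you evaluate the equality of metric tensors directly at the origin, while the paper first derives the functional equation $(1-\vert f_0\vert^2)^{2(d_1+1)}(1-\vert g\vert^2)^{(n+1)(d_2+1)}=(1-\vert z\vert^2)^{2(d+1)}$ (via the no-pure-terms pluriharmonicity argument of Theorem \ref{1.5}) and then reads off the coefficient of $\vert z\vert^2$ --- the same computation in different packaging, so your shortcut is legitimate here. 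One small point to make airtight in the equality case: the decomposition $g(z)=z\,g'(0)+h(z)$ with $h(z)\perp g'(0)$ is not automatic; one should first apply the scalar Schwarz lemma to $\phi(z)=\langle g(z),g'(0)\rangle$, which satisfies $\vert\phi(z)\vert\le\vert g(z)\vert\le\vert z\vert$ and $\phi'(0)=1$, to conclude $\phi(z)=z$, after which $\vert z\vert^2+\vert h(z)\vert^2=\vert g(z)\vert^2\le\vert z\vert^2$ forces $h\equiv 0$ --- this is exactly the fix you sketch, and it supplies more detail than the paper, which simply invokes the Schwarz lemma for maps of $\mbb D$ into $\mbb B^n$.
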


\begin{thm}\label{1.8}
Let $c_1, c_2, \ldots, c_m$ and $d_1, d_2, \ldots, d_n$ be non-negative integers and
\[
f : (\mbb D, ds^2_{c_1}) \times \cdots \times (\mbb D, ds^2_{c_m}) \ra (\mbb D, ds^2_{d_1}) \times \cdots \times (\mbb D, ds^2_{d_n})
\]
be a holomorphic isometry. Let $f = (f_1, f_2,\ldots, f_n)$. Then, after a possible reparametrization, there exist positive integers $n_1, n_2, \ldots, n_m$ with $n_1 + n_2 + \ldots + n_m = n$ such that $f_1, \ldots, f_{n_1}$ depend on $z_1$ only, $f_{n_1+1}, \ldots, f_{n_1+n_2}$ depend on $z_2$ only, and so on till the final subset of components $f_{n_1 + n_2 + \ldots + n_{m-1} +1}, \ldots, f_n$ depend on $z_m$ only.

\medskip

In particular, the $i$-th group of the above $m$ groups of component functions constitutes a holomorphic isometry from $(\mbb D, ds^2_{c_i})$ into the product $(\mbb D, ds^2_{d_{n_1 + \ldots + n_{i-1} + 1}}) \times \cdots \times (\mbb D, ds^2_{d_{n_1 + \ldots + n_i}})$; here $n_0 = 0$.

\medskip

Finally, 
\[
c_i + 1 \le \sum_{k = n_1 + \ldots + n_{i-1} +1}^{n_1 + \ldots + n_i} (d_k + 1)
\]
and equality holds precisely when $f_k = a_kz_i$ for all $n_1 + \ldots + n_{i-1} +1 \le k \le n_1 + \ldots + n_i$, where all the $a_k$'s are unimodular constants.
\end{thm}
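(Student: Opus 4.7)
The plan is to derive a clean sesquianalytic kernel identity from Theorem~\ref{1.3}, exploit it to decouple the components of $f$ by source variable, and then apply Theorem~\ref{1.5} to each of the resulting groups.

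I would first apply Theorem~\ref{1.3} to $f$, using that the weighted Bergman kernel on a polydisc with product weight factors as the product of the one-variable kernels $K_{\mbb D, d}(z, w) = C_d/(1-z\bar w)^{2(d+1)}$. After composing with polydisc automorphisms, which are isometries of the product metrics by (\ref{Trans formula}), to arrange $f(0) = 0$, I would choose $s_0, r_0$ from Theorem~\ref{1.3} that are non-vanishing at the origin (for instance, the normalised reproducing-kernel sections at $0$). Specialising the functional equation at $w = 0$ and using that $K_{\mbb D, d}(\cdot, 0)$ is a nonzero constant forces $s_0(z) = \lambda\, r_0(f(z))$ for some $\lambda \in \mbb C^*$; substituting back and cancelling the $r_0$-factors yields the clean sesquianalytic identity
\[
\prod_{i=1}^m (1 - z_i\bar w_i)^{2(c_i+1)} \;=\; \prod_{j=1}^n \bigl(1 - f_j(z)\overline{f_j(w)}\bigr)^{2(d_j+1)}
\]
on $\mbb D^m \times \mbb D^m$.

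The central step is to use this polynomial identity to show that each non-constant $f_j$ depends on exactly one source variable. For any $w$ with $\overline{f_j(w)} \neq 0$, the factor $(1-f_j(z)\overline{f_j(w)})^{2(d_j+1)}$ divides the polynomial $\prod_i (1 - z_i \bar w_i)^{2(c_i+1)}$ as a polynomial in $z$, so $1 - f_j(z)\overline{f_j(w)}$ is itself a polynomial in $z$ and hence $f_j$ is a polynomial in $z$. Unique factorization in $\mbb C[z_1, \ldots, z_m, \bar w_1, \ldots, \bar w_m]$, where each bilinear form $(1-z_i \bar w_i)$ is irreducible, then gives
\[
1 - f_j(z)\overline{f_j(w)} \;=\; \prod_{i=1}^m (1 - z_i\bar w_i)^{b_{ij}}
\]
for nonnegative integers $b_{ij}$ (the overall constant being $1$ by evaluation at the origin). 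The rank-one requirement that $f_j(z)\overline{f_j(w)}$ factor as a holomorphic function of $z$ times an antiholomorphic function of $w$ -- equivalently, that its Taylor coefficients satisfy the Gram relations $a_{j,\alpha}\bar a_{j,\beta}\, a_{j,\alpha'}\bar a_{j,\beta'} = a_{j,\alpha}\bar a_{j,\beta'}\, a_{j,\alpha'}\bar a_{j,\beta}$ -- then forces $b_{ij} \in \{0,1\}$ with at most one $i$ active, so that $f_j$ is either identically zero or a unimodular multiple $\alpha_j z_{i(j)}$ of a single coordinate. The main technical obstacle lies precisely in this rank-one coefficient matching: ruling out $b_{ij} \geq 2$ or multi-index splits requires a careful Gram-matrix argument on the multi-degree coefficients of $1 - \prod_i (1-z_i\bar w_i)^{b_{ij}}$.

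With the decoupling established, partition the not-identically-zero $f_j$'s into $m$ groups $G_1, \ldots, G_m$ according to the coordinate $z_{i(j)}$ they multiply, and distribute any identically-zero components among the groups so that each $G_i$ is nonempty (nonemptiness is forced because otherwise the pullback metric would degenerate in the $z_i$-direction). After reindexing as in the theorem, restrict the clean kernel identity to the slice $\{z_j = 0 = w_j : j \neq i\}$; the left-hand side collapses to $(1-z_i\bar w_i)^{2(c_i+1)}$ and the right-hand side to $\prod_{k \in G_i}(1 - f_k(z_i)\overline{f_k(w_i)})^{2(d_k+1)}$. This is precisely the kernel identity for a holomorphic isometry from $(\mbb D, ds^2_{c_i})$ into $\prod_{k \in G_i}(\mbb D, ds^2_{d_k})$; applying Theorem~\ref{1.5} to each such single-variable isometry yields the inequality $c_i + 1 \leq \sum_{k \in G_i}(d_k + 1)$ and, in the equality case, the characterization $f_k(z_i) = a_k z_i$ with $|a_k| = 1$.
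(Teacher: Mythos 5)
Your reduction to the polarized identity
\[
\prod_{i=1}^m (1 - z_i\bar w_i)^{2(c_i+1)} \;=\; \prod_{j=1}^n \bigl(1 - f_j(z)\overline{f_j(w)}\bigr)^{2(d_j+1)}
\]
is fine (the paper obtains the equivalent diagonal identity (\ref{fxn pol}) directly by the pure-term Taylor argument rather than via Theorem \ref{1.3}, but that difference is cosmetic). The genuine gap is in your central step. From the fact that the product of the factors $\bigl(1 - f_j(z)\overline{f_j(w)}\bigr)^{2(d_j+1)}$ equals a polynomial you cannot conclude that each factor is a polynomial: these are non-vanishing holomorphic functions on $\mbb D^m$, and divisibility in $\mathcal O(\mbb D^m)$ does not force polynomiality (the phenomenon $e^{g}\cdot e^{-g}=1$). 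Indeed your conclusion is false already for $m=1$, $n=2$, $c_1=d_1=d_2=0$: the square-root embedding $G=(G_1,G_2)$ of Example \ref{example} satisfies $(1-G_1(z)\overline{G_1(w)})(1-G_2(z)\overline{G_2(w)})=1-z\bar w$, yet neither factor is a polynomial and neither component is linear. Consequently the endpoint of your argument --- that every non-constant component is a unimodular multiple of a single coordinate --- cannot be right; it would contradict the equality-case dichotomy in the very statement you are proving (linearity of the $f_k$ in the $i$-th group is supposed to be equivalent to $c_i+1=\sum(d_k+1)$, not automatic).

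The paper gets the decoupling by a different and essentially unavoidable mechanism: it extends the graph of each slice map $z_1\mapsto f(z_1,\ldots,z_m)$ to a projective algebraic curve as in Proposition \ref{extension of graph}, and shows (Lemma \ref{not iden zero}) that any component with $\partial f_i/\partial z_1\not\equiv 0$ must satisfy $f_i(0,z_2,\ldots,z_m)=0$, because such a component takes the value $\infty$ only over $z_1=\infty$ and the variety is invariant under $(z,h)\mapsto(1/\bar z,1/\bar h)$. Setting $z_1=0$ in the functional equation then splits off the group of components genuinely involving $z_1$, and the rigidity of the extended variety (Lemma \ref{family of varieties}: the variety, hence the isometry, cannot depend holomorphically on a parameter) shows that this group depends on $z_1$ alone; induction on the remaining variables finishes the decomposition, after which Theorem \ref{1.5} is applied groupwise exactly as you propose. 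To repair your argument you would need to replace the unique-factorization step with something like this variety-rigidity argument; as written, the proof does not go through.
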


\medskip

\noindent {\it Acknowledgements}: The authors would like to thank the referee for carefully reading the article and providing helpful suggestions.

\section{Weights that do not vanish on the boundary}

\noindent In this section, we will prove Theorem \ref{1.3} using ideas from \cite{Mok1}.

\begin{proof}[Proof of Theorem \ref{1.3}]

For a bounded domain $D \subset \mathbb{C}^n$ and a positive continuous weight $\mu$ on $\overline{D}$, denote the length of a tangent vector $X=(X_1,\ldots,X_n)$ at a point $z\in\Omega$ with respect to the metric $ds^2_{D,\mu}$ by $\Vert X\Vert_{z,D,\mu}$. Then,
\[
\Vert X\Vert_{z,D,\mu}^2=\sum_{i,j} \frac{\partial^2}{\partial z_i\partial\ov{z}_j}\log K_{D,\mu}(z) X_i\ov{X}_j.
\]
Since $f$ is an isometry, 
\[
\Vert X\Vert_{z,D_1,\mu_1} =\Vert df(z) X\Vert_{f(z),D_2,\mu_2}
\]
for $z \in D_1$ and $X \in \mbb C^n$, and this implies that
 \begin{equation*}
\pa \ov \pa \log K_{D_2, \mu_2}(f(z)) = \pa \ov \pa \log K_{D_1, \mu_1}(z)
\end{equation*}
for all $z\in D_1$. Thus,
$\log K_{D_2,\mu_2}(f(z)) -\log K_{D_1,\mu_1}(z)$ is pluriharmonic and hence can be locally written as the real part of a holomorphic function. 

\begin{proof}[Case 1.] Assume that $0\in D_1,D_2$ and $f(0)=0$.
There exists a holomorphic function $\psi$ near the origin such that
\begin{equation}\label{Iso 1}
\log K_{D_2,\mu_2}(f(z))=\log K_{D_1,\mu_1}(z)+ \Re \psi(z),
\end{equation}
for $z$ near $0$.
The hypotheses on the weights imply that for $i = 1, 2$, $L^2_{\mu_i}(D_i) = L^2(D_i)$ and that $\cal O_{\mu_i}(D_i)$ contain the constants. By considering the bounded functional $g \mapsto g - g(0)$ on $\cal O_{\mu_i}(D_i)$, it is possible to choose orthonormal bases $\{s_i\}_{i=0}^{\infty}$ and $\{r_i\}_{i=0}^{\infty}$ of $\mathcal{O}_{\mu_1}(D_1)$ and $\mathcal{O}_{\mu_2}(D_2)$ respectively so that $s_i(0)=r_i(0)=0$ for all $ i\geq 1$, and $s_0(0)\neq 0\neq r_0(0)$. For $z$ near $0$,
\begin{equation*}
K_{D_1,\mu_1}(z) =
\sum_{i=0}^{\infty}\left\lvert s_i(z)\right\rvert^2
=
\left\lvert s_0(z)\right\rvert^2 + \sum_{i=1}^{\infty}\left\lvert s_i(z)\right\rvert^2 = \left\lvert s_0(z)\right\rvert^2 \left( 1+\sum_{i=1}^{\infty}\left\lvert \frac{s_i(z)}{s_0(z)}\right\rvert^2 \right),
\end{equation*}
and by repeating these steps for $K_{D_2,\mu_2}$, 
\[
K_{D_1,\mu_1}(z)=\left\lvert s_0(z)\right\rvert^2 \tilde{K}_{D_1,\mu_1}(z)
\quad
\text{and}
\quad
K_{D_2,\mu_2}(z)=\left\lvert r_0(z)\right\rvert^2 \tilde{K}_{D_2,\mu_2}(z),
\]
where
\[
\tilde{K}_{D_1,\mu_1}(z)=1+\sum_{i=1}^{\infty}\left\lvert \frac{s_i(z)}{s_0(z)}\right\rvert^2
\quad
\text{and}
\quad
\tilde{K}_{D_2,\mu_2}(z)=1+\sum_{i=1}^{\infty}\left\lvert \frac{r_i(z)}{r_0(z)}\right\rvert^2
\]
for $z$ near the origin.
Note that $\sum_{i=1}^{\infty}\left\lvert s_i/s_0\right\rvert^2<1$ in a neighborhood of $0$. By working with the Taylor expansion of $\log(1+x)$, $\log \tilde{K}_{D_1,\mu_1}(z)$ can be expressed as a convergent sum of a countable number of functions of the form $\pm \left\lvert \theta_k\right\rvert^2$, where each $\theta_k$ is a holomorphic function vanishing at $0$. Therefore, by expanding the $\theta_k$'s further as a convergent power series, it follows that the Taylor expansion of $\log \tilde{K}_{D_1,\mu_1}(z)$ at $0$ does not contain terms of pure type $z^I$ and $\ov{z}^I$. Here $z^I = z_1^{i_1}\ldots z_n^{i_n}$ for $z=(z_1,\ldots,z_n)$ and $I=(i_1,\ldots,i_n)$. Similarly, since $f(0) = 0$, the same holds for the Taylor expansion of $\log \tilde{K}_{D_2,\mu_2}(f(z))$ at the origin.

\medskip

Now $(\ref{Iso 1})$ can be rewritten as
\begin{equation*}
\log \left\lvert r_0(f(z))\right\rvert^2 + \log \tilde{K}_{D_2,\mu_2}(f(z))=\log \left\lvert s_0(z)\right\rvert^2 + \log \tilde{K}_{D_1,\mu_1}(z)+ \Re \psi(z).
\end{equation*}
Since $f(0) = 0$ and neither of $r_0, s_0$ vanish at the origin, both
$\log\vert r_0(f(z))\vert^2$ and $\log\vert s_0(z)\vert^2$ are pluriharmonic near the origin in $D_1$. Thus,
\[
\partial\ov{\partial} \log \tilde{K}_{D_2,\mu_2}(f(z))
=\partial\ov{\partial} \log \tilde{K}_{D_1,\mu_1}(z)
\]
and hence, there exists a holomorphic function $\tilde{\psi}$ near the origin such that
\begin{equation}\label{Iso 2}
    \log \tilde{K}_{D_2,\mu_2}(f(z))=\log \tilde{K}_{D_1,\mu_1}(z)+ \Re \tilde{\psi}(z).
\end{equation}
Since the Taylor expansion of $\Re(\tilde{\psi})=\tilde{\psi}+\overline{\tilde{\psi}}$ at the origin consists precisely of terms of pure type, it follows by comparing the two sides of $(\ref{Iso 2})$ that $\Re \tilde{\psi}$ is a constant. Hence,
\[
\tilde{K}_{D_2,\mu_2}(f(z))=a\,\tilde{K}_{D_1,\mu_1}(z)
\]
for some constant $a>0$. This leads to
\begin{equation*}
K_{D_2,\mu_2}(f(z)) = \left\lvert r_0(f(z))\right\rvert^2 \tilde{K}_{D_2,\mu_2}(f(z)) =
a\,\left\lvert r_0(f(z))\right\rvert^2 \tilde{K}_{D_1,\mu_1}(z) =
a\,\frac{\left\lvert r_0(f(z))\right\rvert^2}{\left\lvert s_0(z)\right\rvert^2}  K_{D_1,\mu_1}(z).
\end{equation*}
Since $K_{D_1,\mu_1}(0)=\vert s_0(0)\vert^2\,\,\text{and}\,\,K_{D_2,\mu_2}(0)=\vert r_0(0)\vert^2$, we observe that
\[
a=\frac{K_{D_2,\mu_2}(0)\,\left\lvert s_0(0)\right\rvert^2}{K_{D_1,\mu_1}(0)\,\left\lvert r_0(0)\right\rvert^2}=1
\]
by putting $z = 0$.
Thus, for $z$ near $0$,
\begin{equation}\label{Iso 3}
\left\lvert s_0(z)\right\rvert^2 \,K_{D_2,\mu_2}(f(z))
=
\left\lvert r_0(f(z))\right\rvert^2 \,K_{D_1,\mu_1}(z).
\end{equation}
Now polarize this equality. For $z, w$ near the origin, write
\[
s_0(z)\,\overline{s_0(w)} \,K_{D_2,\mu_2}(f(z),f(w))
=
r_0(f(z))\,\overline{r_0(f(w))} \, K_{D_1,\mu_1}(z,w) + H(z,w),
\]
where $H(z,w)=\sum_{(I,J)} h_{IJ}z^I\bar{w}^J$ is holomorphic in $z$ and anti-holomorphic in $w$. On restriction to the diagonal $\{z=w\}$, we have $H(z,z)=0$, i.e., $\sum h_{IJ}z^I\bar{z}^J=0$. So, $h_{IJ}=0$ for all $I,J$, and hence $H\equiv 0$. Thus,
\begin{equation}\label{Iso 4}
    s_0(z)\,\overline{s_0(w)}\,K_{D_2,\mu_2}(f(z),f(w))= r_0(f(z))\,\overline{r_0(f(w))}\, K_{D_1,\mu_1}(z,w).
\end{equation}
for $z, w$ near the origin and therefore for all $z, w \in D_1$.
\end{proof}

\begin{proof}[Case 2.] Otherwise, choose $p\in D_1$ and let $q = f(p) \in D_2$. The translations $\Lambda_1:z\mapsto z-p$ and $\Lambda_2:z\mapsto z-q$ move the base points $p, q$ to the origin that are contained in the bounded images $\Hat{D}_i=\Lambda_i(D_i)$ for $i=1,2$. The weights $\Hat{\mu}_i=\mu_i\circ\Lambda^{-1}_i$ are positive and continuous on the closure of $\Hat{D}_i$ for $i=1,2$. 

It follows from $(\ref{weighted_trans})$ that $\Lambda_1^{-1}:(\Hat{D}_1,ds^2_{\Hat{D}_1,\Hat{\mu}_1})\rightarrow(D_1,ds^2_{D_1,\mu_1})$ and $\Lambda_2:(D_2,ds^2_{D_2,\mu_2})\rightarrow(\Hat{D}_2,ds^2_{\Hat{D}_2,\Hat{\mu}_2})$ are isometries and therefore, $g=\Lambda_2\circ f\circ \Lambda^{-1}_1:(\Hat{D}_1,ds^2_{\Hat{D}_1,\Hat{\mu}_1})\rightarrow(\Hat{D}_2,ds^2_{\Hat{D}_2,\Hat{\mu}_2})$ is a holomorphic isometry such that $g(0)=0$. As before, choose orthonormal bases $\{s_i\}_{i=0}^{\infty}$ and $\{r_i\}_{i=0}^{\infty}$ of $\mathcal{O}_{\mu_1}(D_1)$ and $\mathcal{O}_{\mu_2}(D_2)$ respectively so that 
$s_i(p)=0$, $r_i(q)=0$ for $ i\geq 1$, and $s_0(p)\neq 0$, $r_0(q)\neq 0$. Observe that the pull-backs $\Hat{s}_i=s_i\circ\Lambda_1^{-1}$ and $\Hat{r}_i=r_i\circ\Lambda_2^{-1}$, $i \ge 0$, are orthogonal, i.e., $\langle \Hat{s}_i,\Hat{s}_j\rangle_{\mathcal{O}_{\Hat{\mu}_1}(\Hat{D}_1)} = \langle \Hat{r}_i,\Hat{r}_j\rangle_{\mathcal{O}_{\Hat{\mu}_2}(\Hat{D}_2)}=\delta_{ij}$ for all $i, j \ge 0$. Therefore,
$\{\Hat{s}_i\}_{i=0}^{\infty}$ and $\{\Hat{r}_i\}_{i=0}^{\infty}$ form orthonormal bases of $\mathcal{O}_{\Hat{\mu}_1}(\Hat{D}_1)$ and $\mathcal{O}_{\Hat{\mu}_2}(\Hat{D}_2)$ respectively, such that $\Hat{s}_i(0)=\Hat{r}_i(0)=0$ for $i\geq 1$ and $\Hat{s}_0(0)\neq 0\neq\Hat{r}_0(0)$.

\medskip 

It now follows from Case 1 that 
\[
\Hat{s}_0(z)\,\overline{\Hat{s}_0(w)}\,
K_{\Hat{D}_2,\Hat{\mu}_2}(g(z),g(w))=
\Hat{r}_0(g(z))\,\overline{\Hat{r}_0(g(w))}\, K_{\Hat{D}_1,\Hat{\mu}_1}(z,w)
\]
for $z,w\in \Hat{D}_1$. 
Finally, using the transformation formula $K_{D_i,\mu_i}(z,w)=K_{\Hat{D}_i,\Hat{\mu}_i}(\Lambda_i(z),\Lambda_i(w))$ for $i=1,2$, we obtain 
\begin{eqnarray*}
s_0(z)\,\overline{s_0(w)}\,K_{D_2,\mu_2}(f(z),f(w))
&=&
s_0(z)\,\overline{s_0(w)}\,K_{\Hat{D}_2,\Hat{\mu}_2}(\Lambda_2( f(z)),\Lambda_2(f(w)))\\
&=&
\Hat{s}_0(\Lambda_1(z))\,\overline{\Hat{s}_0(\Lambda_1(w))}\,K_{\Hat{D}_2,\Hat{\mu}_2}(g(\Lambda_1(z)),g(\Lambda_1(w)))\\
&=&
\Hat{r}_0(g(\Lambda_1(z)))\,\overline{\Hat{r}_0(g(\Lambda_1(w)))}\, 
K_{\Hat{D}_1,\Hat{\mu}_1}(\Lambda_1(z),\Lambda_1(w))\\
&=&
r_0(f(z))\,\overline{r_0(f(w))}\, K_{D_1,\mu_1}(z,w).
\end{eqnarray*}
for every $z,w\in D_1$.
\end{proof}

Finally, if $D_1 = D_2 = \mbb D$ and $\mu_1 \equiv 1$ we can choose $p=0$. In this case, $s_0\equiv 1/\sqrt{\pi}$ and $r_0$ is chosen so that $r_0(f(0)) \not= 0$. Therefore,
\[
K_{D_2,\mu_2}(f(z)) = \pi\,\left\lvert r_0(f(z))\right\rvert^2\,
K_{\mathbb{D}}(z),\quad z\in\mathbb{D}.
\]
Let $Z = \{r_0 = 0\}$ which, if non-empty, is discrete in $\mbb D$. Let $z_n \ra \pa \mbb D$. Then $f(z_n)$ cannot cluster at a point in $\mbb D \sm Z$, for if it did, then $r_0(f(z_n))$ would be non-vanishing and $K_{\mbb D}(z_n) \ra \infty$ and this would imply that $K_{D_2, \mu_2}(f(z_n)) \ra \infty$ -- this is a contradiction since $K_{D_2, \mu_2}(f(z_n))$ must remain bounded as $f(z_n)$ is assumed to cluster at a point in $\mbb D \sm Z \subset \mbb D$. Thus, $f : \mbb D \ra \mbb D \sm Z$ is proper. Now, by the continuity of $f$, the cluster set of $\pa \mbb D$ under $f$ is connected. This means that the cluster set cannot intersect $Z$, for if it did, then the cluster set would reduce to a point (in $Z$) and hence $f$ would be a constant. Contradiction. Therefore, the cluster set of $\pa \mbb D$ under $f$ is contained entirely in $\pa \mbb D$ which implies that $f$ is a proper self-map of $\mbb D$.
Since $f$ is an isometry, $f'$ cannot vanish. Therefore, $f$ is a covering and hence an automorphism of $\mbb D$. That is, $f=e^{i\theta} \varphi_{\zeta}$ where
\[
\varphi_{\zeta}(z)=\frac{z-\zeta}{1-\bar{\zeta}z},\quad z\in\mathbb{D}.
\]
for some $\zeta\in \mathbb{D}$ and $\theta\in\mathbb{R}$. Therefore,
\begin{eqnarray*}
K_{\mathbb{D},\mu_2}(z,w)
&=&
\pi\,
r_0(z) \, \ov{r_0(w)}\,
K_{\mathbb{D}}(f^{-1}(z),f^{-1}(w))\\ 
&=& 
\pi\,
r_0(z) \, \ov{r_0(w)}\, K_{\mathbb{D}}(e^{-i\theta}\varphi_{-\zeta}(z),e^{-i\theta}\varphi_{-\zeta}(w))\\
&=&
\frac{r_0(z)\,\overline{r_0(w)}}{\left(1-\varphi_{-\zeta}(z)\,\overline{\varphi_{-\zeta}(w)}\right)^2}
\end{eqnarray*}
for $z,w\in \mathbb{D}$.
\end{proof}

\begin{rem}
First, the assumption that $\mu_1, \mu_2$ be positive continuous weights is not really necessary as the proof shows. What is essential is that the spaces $\mathcal O_{\mu_i}(D_i)$ contain the constants and be large enough for the metrics $ds^2_{D_i, \mu_i}$ to be defined.

Second, in the last part of the proof, assume that $D_1 = \mbb D$, $\mu_1 \equiv 1$ and that $D_2 \subset \mbb C^n$ is a bounded domain. The given reasoning shows that the isometry $f$ must be a proper holomorphic map from $\mbb D$ onto $D_2 \sm Z$, where $Z = \{r_0 = 0\} \subset D_2$. In case, $D_2 \subset \mbb C$, then $f: \mbb D \ra D_2$ must be an unbranched proper mapping.
\end{rem}

\section{Proof of Theorem \ref{1.5}}

We will first compute $K_{\mathbb{D},d}$ for $d\geq 0$. Note that $\mu_d(z)=\pi^d (1-\left\vert z\right\rvert^2)^{2d}$. For non-negative integers $m\neq k$,
\begin{equation*}
\langle z^m,z^k\rangle_{L^2_{\mu_d}(\mathbb{D})} =
\pi^d \int_{\mathbb{D}}z^m \bar{z}^k (1-\left\vert z\right\rvert^2)^{2d}\,dV(z) = 0
\end{equation*}
and for a non-negative integer $m$,
\begin{eqnarray*}
\langle z^m,z^m\rangle_{L^2_{\mu_d}(\mathbb{D})}
&=&
2 \, \pi^{d+1}\int_0^1 r^{2m+1} (1-r^2)^{2d}\,dr
=
\pi^{d+1}\int_0^1 s^m (1-s)^{2d}\,ds
\\
&=&
\pi^{d+1}\frac{\Gamma(m+1)\Gamma(2d+1)}{\Gamma(m+2d+2)}
=
\pi^{d+1} \frac{m! \, (2d)!}{(m+2d+1)!}.
\end{eqnarray*}
Hence, $\left\lbrace \sqrt{\frac{1}{\pi^{d+1}}\frac{(m+2d+1)!}{m! \, (2d)!}}z^m\right\rbrace_{m=0}^{\infty}$ is an orthonormal basis of the space $\mathcal{O}_{\mu_d}(\mathbb{D})$. Therefore,
\begin{equation*}
K_{\mathbb{D},d}(z,w)
=\sum_{m=0}^{\infty}\frac{1}{\pi^{d+1}}\frac{(m+2d+1)!}{m! \, (2d)!}z^m\ov{w}^m =
\frac{(2d+1)}{\pi^{d+1}}\frac{1}{(1-z\ov{w})^{2d+2}}
\end{equation*}
for $z,w\in\mathbb{D}$.

\medskip

As a warm-up, we begin with an observation. 

\begin{prop}\label{1.4}
For non-negative integers $d_1 \not= d_2$, there cannot exist a holomorphic isometry
\[
f : (\mbb D, ds^2_{d_1}) \ra (\mbb D, ds^2_{d_2}).
\]
\end{prop}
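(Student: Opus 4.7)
The plan is to show that every weighted metric $ds^2_d$ on $\mbb D$ is merely a constant multiple of the classical Bergman metric, and then to extract the contradiction from an elementary $\pa\ov\pa\log$ computation. From the formula $K_{\mbb D,d}(z)=(2d+1)/\bigl[\pi^{d+1}(1-|z|^2)^{2d+2}\bigr]$ just obtained, one reads off $\log K_{\mbb D,d}(z) = \log\bigl((2d+1)/\pi^{d+1}\bigr) - (2d+2)\log(1-|z|^2)$ and hence
\[
ds^2_d \;=\; \frac{2(d+1)}{(1-|z|^2)^2}\,dz\otimes d\bar z \;=\; (d+1)\,ds^2_{\mbb D}.
\]

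Suppose toward a contradiction that $f:(\mbb D,ds^2_{d_1})\to(\mbb D,ds^2_{d_2})$ is a holomorphic isometry. The identity $f^\ast ds^2_{d_2} = ds^2_{d_1}$ reduces to the pointwise equation
\[
\frac{|f'(z)|^2}{(1-|f(z)|^2)^2} \;=\; \frac{d_1+1}{d_2+1}\cdot\frac{1}{(1-|z|^2)^2}, \qquad z\in\mbb D,
\]
and in particular $f'$ has no zeros in $\mbb D$.

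To finish, I would apply $\pa\ov\pa\log$ to both sides of this identity. Since $f'$ is zero-free, $\log|f'(z)|^2$ is pluriharmonic and its contribution vanishes; using the chain rule together with $\pa\ov\pa\log(1-|w|^2)=-1/(1-|w|^2)^2$, the left-hand side produces $2\,|f'(z)|^2/(1-|f(z)|^2)^2$, whereas the right-hand side produces $2/(1-|z|^2)^2$. Comparing this new equation with the original one forces $(d_1+1)/(d_2+1)=1$, which contradicts $d_1\ne d_2$. I do not foresee any genuine obstacle; the computation is merely the analytic shadow of the fact that $ds^2_{\mbb D}$ has constant negative Gaussian curvature, which is preserved by holomorphic pullbacks but scaled inversely by any constant conformal factor, so only the trivial rescaling is possible.
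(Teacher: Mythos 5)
Your argument is correct, and it takes a genuinely different (and arguably cleaner) route than the paper's. The paper normalizes $f(0)=0$ and then compares Taylor coefficients of the pointwise identity
$(2d_2+2)\,\abs{f'(z)}^2(1-\abs{z}^2)^2=(2d_1+2)(1-\abs{f(z)}^2)^2$
at the origin: the constant term forces $f'(0)\neq 0$, the coefficient of $z$ forces $f''(0)=0$, and the coefficient of $\abs{z}^2$ then yields $d_1=d_2$. You instead observe that $ds^2_d=(d+1)\,ds^2_{\mbb D}$, deduce from the isometry identity that $f'$ is zero-free, and apply $\pa\ov\pa\log$ once more to the identity; since $\log\abs{f'}^2$ is harmonic and the conformal constant is annihilated, this second equation says $f$ is a local isometry of the unweighted Poincar\'e metric, and comparing with the first equation forces $(d_1+1)/(d_2+1)=1$. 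This is exactly the statement that holomorphic pullback preserves the (constant, negative) Gaussian curvature while a constant conformal rescaling by $c$ scales it by $1/c$, so only $c=1$ survives. Your route avoids both the normalization $f(0)=0$ and the coefficient bookkeeping, and it makes the conceptual reason for the rigidity visible; the paper's computation is more elementary but delivers the same conclusion. All the individual steps you sketch (zero-freeness of $f'$, the chain-rule identity $\pa\ov\pa\log(1-\abs{f}^2)=-\abs{f'}^2/(1-\abs{f}^2)^2$, harmonicity of $\log\abs{f'}^2$) check out, so there is no gap.
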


\begin{proof}
Let $f: (\mathbb{D},ds^2_{d_1})\rightarrow (\mathbb{D},ds^2_{d_2})$ be a holomorphic isometry. Here, $d_1,d_2$ are non-negative integers. If $f(0)=a$, then choose an automorphism $\phi_a$ of $\mathbb{D}$ such that $\phi_a(a)=0$. Since the composition of two isometries remains an isometry, $\phi_a\circ f:(\mathbb{D}, ds^2_{d_1})\rightarrow (\mathbb{D}, ds^2_{d_2})$ is a holomorphic isometry that fixes the origin. Therefore, without loss of generality, we can assume that $f(0)=0$. Since $f$ is an isometry, 
\[
\frac{\partial^2}{\partial z\partial\bar{z}} \log K_{\mbb D, d_2}(f(z))
=\frac{\partial^2}{\partial z\partial\bar{z}} \log K_{\mbb D, d_1}(z),\quad z\in \mathbb{D}
\]
which simplifies to
\[
\frac{2d_2+2}{\left(1-\left\lvert f(z)\right\rvert^2\right)^2}\,\left\lvert f'(z)\right\rvert^2=\frac{2d_1+2}{\left(1-\left\lvert z\right\rvert^2\right)^2}.
\]
By substituting $f(z)=\sum_{n=1}^{\infty}\frac{f^{n}(0)}{n!}z^n$, we obtain
\begin{multline*}
\frac{2d_2+2}{2d_1+2}\,(1+\left\lvert z\right\rvert^4-2\left\lvert z\right\rvert^2)\,\left(\sum_{n,m=0}^{\infty}\frac{f^{n+1}(0)}{n!}\frac{\overline{f^{m+1}(0)}}{m!}z^n\bar{z}^m\right)
\\
=1+\left(\sum_{n,m=1}^{\infty}\frac{f^n(0)}{n!}\frac{\overline{f^m(0)}}{m!}z^n\bar{z}^m\right)^2
-2\left(\sum_{n,m=1}^{\infty}\frac{f^n(0)}{n!}\frac{\overline{f^m(0)}}{m!}z^n\bar{z}^m\right).
\end{multline*}
On comparing the constant term, we get
\[
(2d_2+2)\left\lvert f'(0)\right\rvert^2=2d_1+2.
\]
Thus, $f'(0)\neq 0$. On comparing the coefficients of $z$, it follows that $f''(0)=0$ and therefore,
\[
-2(2d_2+2)\left\lvert f'(0)\right\rvert^2=-2(2d_1+2)\left\lvert f'(0)\right\rvert^2
\]
on comparing the coefficients of $\left\lvert z\right\rvert^2$. This implies that $d_1=d_2$.
\end{proof}

\begin{proof}[Proof of Theorem \ref{1.5}]
For brevity, we will write $K_d$, $K_{d_i}$, instead of $K_{\mathbb{D},d}$, $K_{\mathbb{D},d_i}$. By composing each component of $f=(f_1,f_2,\ldots,f_n)$ with a suitable automorphism of $\mbb D$, we may assume that $f(0) = 0$ while retaining the fact that the reparametrized $f$ is an isometry. Therefore, 
\[
\sum_{i=1}^n \left(\frac{\partial^2}{\partial z\partial\bar{z}}\log K_{d_i}\right)(f_i(z))\, \vert f_i'(z)\vert^2 \,dz \otimes d\ov{z}
=
\frac{\partial^2}{\partial z\partial\bar{z}}\log K_{d}(z) \,dz \otimes d\ov{z}
\]
which is equivalent to saying that
\begin{equation}\label{Ng1}
    \sum_{i=1}^n \frac{\partial^2}{\partial z\partial\bar{z}}\log K_{d_i}(f_i(z))=\frac{\partial^2}{\partial z\partial\bar{z}}\log K_{d}(z), \quad z\in \mathbb{D}.
\end{equation}
Therefore, there exists a holomorphic function $\psi$ on $\mathbb{D}$ such that
\begin{equation}\label{Ng2}
\sum_{i=1}^n\log K_{d_i}(f_i(z))=\log K_{d}(z)+ \Re \psi(z),\quad z\in\mathbb{D}.
\end{equation}

\noindent For $1 \le i \le n$
\begin{eqnarray*}
K_{d_i}(f_i(z))
&=&
\frac{2d_i+1}{\pi^{d_i+1}}\frac{1}{\left(1-\left\lvert f_i(z)\right\rvert^2\right)^{2d_i+2}}
\\
&=&
\frac{2d_i+1}{\pi^{d_i+1}}\frac{1}{(2d_i+1)!}\sum_{m=0}^{\infty} (m+2d_i+1)(m+2d_i)\cdots (m+1) \left\lvert f_i(z)\right\rvert^{2m}
\\
&=&
\frac{2d_i+1}{\pi^{d_i+1}}\left(1+\frac{1}{(2d_i+1)!}\sum_{m=1}^{\infty}(m+2d_i+1)(m+2d_i)\cdots (m+1) \left\lvert f_i(z)\right\rvert^{2m}\right)
\end{eqnarray*}
and similarly
\[
K_d(z)
=
\frac{2d+1}{\pi^{d+1}}\left(1+\frac{1}{(2d+1)!}\sum_{m=1}^{\infty}(m+2d+1)(m+2d)\cdots(m+1) \left\lvert z\right\rvert^{2m}\right).
\]
Therefore, we can write $K_{d_i}(f_i(z))=\frac{2d_i+1}{\pi^{d_i+1}}\,\tilde{K}_{d_i}(f_i(z))$ and $K_d(z)=\frac{2d+1}{\pi^{d+1}}\,\tilde{K}_d(z)$, where
\[
\tilde{K}_{d_i}(f_i(z))=1+\sum_{m=1}^{\infty}\left\lvert c^i_m\,f_i(z)\right\rvert^{2m},
\quad
(c^i_m)^{2m}=\frac{1}{(2d_i+1)!}(m+2d_i+1)(m+2d_i)\cdots (m+1)
\]
and
\[
\tilde{K}_{d}(z)=1+\sum_{m=1}^{\infty} \left\lvert c_m\,z\right\rvert^{2m},
\quad
(c_m)^{2m}=\frac{1}{(2d+1)!}(m+2d+1)(m+2d)\cdots(m+1).
\]
Since $f_i(0)=0$, the Taylor expansions of $\log \tilde{K}_{d_i}(f_i(z))$ and $\log \tilde{K}_{d}(z)$  at $z = 0$ do not contain pure terms. Hence, rewriting (\ref{Ng2}) as
\[
\sum_{i=1}^n \log\frac{2d_i+1}{\pi^{d_i+1}} +  \sum_{i=1}^n\log \tilde{K}_{d_i}(f_i(z))=\log\frac{2d+1}{\pi^{d+1}} + \log \tilde{K}_{d}(z)+ \Re \psi(z),
\]
we see that $\Re \psi$ must be a constant since its Taylor expansion contains only pure terms. In fact,  
\[
\Re \psi \equiv \log\left(\frac{1}{\pi^{\sum d_i+ (n-1)-d }}\,\frac{\prod_i (2d_i+1)}{(2d+1)}\right)
\]
since both $\log \tilde{K}_{d_i}(f_i(z))$ and $\log \tilde{K}_{d}(z)$ vanish at $0$. Hence,
\[
\sum_{i=1}^n\log K_{d_i}(f_i(z))=\log K_{d}(z) + \log\left(\frac{1}{\pi^{\sum d_i+ (n-1)-d }}\,\frac{\prod_i (2d_i+1)}{(2d+1)}\right),\quad z\in \mathbb{D}
\]
which simplifies to 
\begin{equation}\label{Fxn eqn}
\prod_{i=1}^n \left(1-\left\lvert f_i(z)\right\rvert^2\right)^{d_i+1}=\left(1-\left\lvert z\right\rvert^2\right)^{d+1}
\end{equation}
upon substituting the explicit expressions of the kernels. On expanding as a power series and comparing the coefficients of $\left\lvert z\right\rvert^2$ on both sides, we obtain
\[
\sum_{i=1}^n (d_i+1)\left\lvert f_i'(0)\right\rvert^2=(d+1).
\]
An application of the Schwarz lemma to each component $f_i$ shows that $\left\lvert f_i'(0)\right\rvert\leq 1$ and hence,
\begin{equation}
d+1\leq \sum_{i=1}^n (d_i + 1)
\end{equation}
with equality holding precisely when $\left\lvert f_i'(0)\right\rvert=1$ for all $i$, that is, $f_i(z)=\alpha_i\,z$ for unimodular constants $\alpha_i\in\mathbb{C}$.

\medskip

Moving towards the other part of this theorem, we first make some preparations, inspired by Ng's work \cite{Ng}.

\begin{prop}\label{extension of graph}
There exists an irreducible one-dimensional projective analytic subvariety $V$ in $\mathbb{P}^1\times(\mathbb{P}^1)^n$ extending the graph of $f$ such that $\pi: V \ra \mbb P^1$, the natural projection to the first factor, is a finite branched covering. 
\end{prop}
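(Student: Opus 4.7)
The strategy is to polarize the functional equation (\ref{Fxn eqn}) in order to extract polynomial relations satisfied by the graph of $f$, and then take the Zariski closure in $\mbb P^1 \times (\mbb P^1)^n$.

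First, I would polarize (\ref{Fxn eqn}) by the same trick used in the proof of Theorem \ref{1.3}. The function
\[
F(z,w) = \prod_{i=1}^n \bigl(1 - f_i(z) \ov{f_i(w)}\bigr)^{d_i+1} - (1 - z\bar w)^{d+1}
\]
is holomorphic in $z$, antiholomorphic in $w$, and vanishes on the diagonal $\{z = w\}$ by (\ref{Fxn eqn}). Expanding $F$ as a double power series about $(0,0)$ and restricting to the diagonal forces every coefficient to vanish, whence $F \equiv 0$ on $\mbb D \times \mbb D$ and therefore
\[
\prod_{i=1}^n \bigl(1 - f_i(z) \ov{f_i(w)}\bigr)^{d_i+1} = (1 - z\bar w)^{d+1}, \quad z, w \in \mbb D.
\]

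Next, for each $w_0 \in \mbb D$, setting $s = \bar{w}_0$ and $c_i = \ov{f_i(w_0)}$, the polarized identity specializes to the polynomial relation $P_{w_0}(z, f_1(z), \ldots, f_n(z)) = 0$ on $\mbb D$, where
\[
P_{w_0}(X_0, X_1, \ldots, X_n) = \prod_{i=1}^n (1 - c_i X_i)^{d_i+1} - (1 - s X_0)^{d+1}.
\]
Homogenizing each coordinate separately turns $P_{w_0}$ into a multi-homogeneous polynomial on $\mbb P^1 \times (\mbb P^1)^n$, whose vanishing locus is a projective hypersurface containing the closure $\ov{\Gamma}$ of the graph $\Gamma = \{(z, f_1(z), \ldots, f_n(z)) : z \in \mbb D\}$. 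Because every component $f_i$ is non-constant, as $w_0$ varies over $\mbb D$ the parameter vector $(s, c_1, \ldots, c_n)$ traces a non-constant holomorphic arc in $\mbb C^{n+1}$, so the family of hypersurfaces $\{V(P_{w_0})\}_{w_0 \in \mbb D}$ is sufficiently rich.

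I would then select $n$ base points $w^{(1)}, \ldots, w^{(n)} \in \mbb D$ in sufficiently general position so that the intersection $W := V(P_{w^{(1)}}) \cap \cdots \cap V(P_{w^{(n)}})$ has pure dimension one along $\ov{\Gamma}$. Taking $V$ to be the irreducible component of $W$ containing $\ov{\Gamma}$, which is uniquely determined since $\Gamma$ is irreducible as the image of the connected domain $\mbb D$, one obtains an irreducible one-dimensional projective subvariety of $\mbb P^1 \times (\mbb P^1)^n$ extending the graph of $f$. Finally, the first-factor projection $\pi : V \to \mbb P^1$ is a morphism of irreducible projective curves whose restriction to $\Gamma$ is the identity on $\mbb D$, and hence $\pi$ is non-constant, surjective, finite, and thus a finite branched covering. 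The main obstacle is the dimension argument: one must verify that for a generic choice of the $w^{(j)}$'s the $n$ hypersurfaces $V(P_{w^{(j)}})$ intersect properly along $\ov{\Gamma}$. This is to be addressed by computing the Jacobian $\bigl(\partial P_{w^{(j)}} / \partial X_i\bigr)$ at a generic point of $\Gamma$ and showing that its rank equals $n$ for suitably chosen $w^{(j)}$'s, which should follow from the non-constancy of each $f_i$ together with the explicit product-minus-power form of the $P_{w_0}$.
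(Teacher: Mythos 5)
Your approach is genuinely different from the paper's: you polarize the functional equation and try to cut out $V$ as a component of the intersection of $n$ hypersurfaces $V(P_{w^{(j)}})$, whereas the paper (following Ng) rewrites $\prod_i(1-|f_i|^2)^{d_i+1}=(1-|z|^2)^{d+1}$ as an equality of two sums of norm-squares of holomorphic functions, invokes the existence of a unitary matrix relating the two tuples, and extracts from its first $n$ rows equations of the form $f_i=Q_i(z,f_1,\ldots,f_n)$ with $Q_i$ containing no linear terms in the $f_i$'s. The point of that manoeuvre is precisely that the resulting map $H=(f_i-Q_i)_i$ has Jacobian $\bigl[\,\ast\mid I_n\,\bigr]$ at the base point, so the implicit function theorem immediately gives a smooth one-dimensional branch. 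Your polarization step itself is correct, but the step you defer --- ``compute the Jacobian $(\partial P_{w^{(j)}}/\partial X_i)$ at a generic point of $\Gamma$ and show its rank equals $n$'' --- is the entire content of the proposition, and as stated it is false.

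Concretely, at a point $(z,f_1(z),\ldots,f_n(z))$ of the graph one computes
\[
\frac{\partial P_{w_0}}{\partial X_i}=-(d_i+1)\,\ov{f_i(w_0)}\,\frac{\prod_k\bigl(1-\ov{f_k(w_0)}f_k(z)\bigr)^{d_k+1}}{1-\ov{f_i(w_0)}f_i(z)},\qquad
\frac{\partial P_{w_0}}{\partial X_0}=(d+1)\,\ov{w_0}\,(1-\ov{w_0}z)^{d}.
\]
For the isometries $f_i(z)=\alpha_i z$ with $|\alpha_i|=1$ (which occur exactly in the equality case of Theorem \ref{1.5}, so your argument must handle them), every factor $1-\ov{f_i(w_0)}f_i(z)$ equals $1-\ov{w_0}z$, and the gradient collapses to
\[
\nabla P_{w_0}=\ov{w_0}\,(1-\ov{w_0}z)^{d}\,\bigl((d+1),\,-(d_1+1)\ov{\alpha_1},\ldots,-(d_n+1)\ov{\alpha_n}\bigr),
\]
which is a fixed direction independent of $w_0$. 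Hence the Jacobian of any collection $\{P_{w^{(j)}}\}$ has rank exactly $1$ along $\Gamma$, not $n$, for every choice of base points; the intersection is badly non-transverse along the very curve you are trying to isolate, and non-constancy of the $f_i$ does not rescue you. (In small examples the set-theoretic intersection still happens to be a curve, but establishing this requires a second-order analysis you have not supplied, and for $d+1>n$ it is not clear the component through $\ov{\Gamma}$ is one-dimensional at all.) To close the gap you need equations whose linearization along the graph is nondegenerate, which is exactly what the sums-of-squares/unitary decomposition in the paper manufactures; I would recommend adopting that device rather than trying to repair the generic-position argument.
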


\begin{proof}
Expand both sides of the functional equation
\[
\prod_{i=1}^n \left(1-\lvert f_i\rvert^2\right)^{d_i+1} = (1-\lvert z\rvert^2)^{d+1},
\]
and rearrange terms to write it as
\[
\sum_{j=1}^{m_1}\lvert A_j\rvert^2 = \sum_{j=1}^{m_2} \lvert B_j\rvert^2,
\]
where $A_i=f_i$ for $1\leq i\leq n$. Also, the terms are rearranged in a manner such that 
\begin{enumerate}
\item In the cases when $d_i$, $d$ are odd for all $i$, and when $d_i$, $d$ are even for some $i$, we have
\[
m_1=m_2=\frac{1}{2}\left(\prod\limits_{i=1}^n(d_i+2)+(d+2)\right)\geq 2^{n-1}+1\geq n.
\]
\item Otherwise
\[
m_2+1=m_1=\frac{1}{2}\left(\prod\limits_{i=1}^n(d_i+2)+(d+2)+1\right)\geq n.
\]
In this case, we will add a term $B_{m_1}=0$ on the right-hand side to ensure that both sides have the same number of terms.
\end{enumerate}
Here, $A_j$, $B_j$ are holomorphic functions on $\mathbb{D}$. Therefore, there exists an $m_1\times m_1$ unitary matrix $U$ such that
\[
\begin{pmatrix}
A_1\\
A_2\\
\vdots\\
A_{m_1}
\end{pmatrix}
=
U
\begin{pmatrix}
B_1\\
B_2\\
\vdots\\
B_{m_1}
\end{pmatrix}.
\]
The first $n$ equations are of the form
\[
f_i=Q_i(z,f_1,\ldots,f_n),\quad i=1,\ldots,n,
\]
where $Q_i$'s are polynomials that do not contain linear terms in the $f_i$'s. These $n$ equations define a projective subvariety $W \subset \mathbb{P}^1\times(\mathbb{P}^1)^n$ containing the ($1$-dimensional) graph of $f$. Define the smooth map $H:\mathbb{C}^{n+1}\rightarrow\mathbb{C}^n$ by $H_i(z,f_1,\ldots,f_n)=f_i-Q_i(z,f_1,\ldots,f_n)$.  Since $f$ vanishes at $0$, the point $\bar{0}\in\mathbb{P}^1\times(\mathbb{P}^1)^n$ lies on $W$. Thus, $H(0)=0$ and
\[
dH(0)=\left[
\begin{matrix}
* & 1 & 0 & \cdots & 0\\
* & 0 & 1 & \cdots & 0\\
\vdots& \vdots & \vdots & \ddots & \vdots \\
* & 0 & 0 & \cdots & 1
\end{matrix}
\right].
\]
By the implicit function theorem, there exists a unique holomorphic function $g:U\rightarrow \mathbb{C}^n$ on some neighborhood $U$ of $0$ such that $H(z,g(z))=0$ for all $z\in U$. Therefore, $W$ is smooth at $\bar{0}$ and is one-dimensional near $\bar{0}$. 

\medskip

Let $V$ be the irreducible component of $W$ containing $0$. Note that $V$ contains the graph of $f$. The projection $\pi :V\rightarrow\mathbb{P}^1$ to the first factor is a proper map and hence $\pi(V)$ is an analytic subvariety of $\mathbb{P}^1$. Since $V$ contains the graph of $f$, $\pi(V)$ cannot be discrete. Therefore, $\pi(V)=\mathbb{P}^1$. Also, since $V$ is connected, there exists a positive integer $s$ such that for a generic point $z\in\mathbb{P}^1$, $\#(\pi^{-1}(z))=s$.
\end{proof}

Consider the projections $P_i: V\rightarrow \mathbb{P}^1\times\mathbb{P}^1$ defined by $P_i(z,\zeta_1,\ldots,\zeta_n)=(z,\zeta_i)$. Note that all the $P_i$'s are proper. Therefore, $V_i=P_i(V)\subset\mathbb{P}^1\times\mathbb{P}^1$ is a one-dimensional projective subvariety containing the graph of the component function $f_i:(\mathbb{D},ds^2_d)\rightarrow (\mathbb{D},ds^2_{d_i})$. Let $\pi_i:V_i\rightarrow\mathbb{P}^1$ denote the proper projection map onto the first factor which exhibits $V_i$ as a finite branched cover over $\mbb P^1$. Let $s_i$ be the cardinality of $\pi_i^{-1}(z)$ for a generic $z \in \mbb P^1$. Since $\pi=\pi_i\circ P_i$, it follows that for each $i$, $s_i$ divides $s$. 

\medskip

Lemmas 4.3, 4.4 and Proposition 4.5 from \cite{Ng} can now be applied verbatim to conclude the following. First, if $(z,a_1,\ldots,a_n), (w,b_1,\ldots,b_n)$ be any two points on $V$, then
\begin{equation}\label{Fxn eqn 1}
\prod_{i=1}^n\left(1-a_i\bar{b_i}\right)^{d_i+1}=(1-z\bar{w})^{d+1}.
\end{equation}
Second, let $(z,a_1,\ldots,a_n)$ be a point on $V$. If $z \in\mathbb{C}$ then $a_i \in \mbb C$ for all $i$. Finally, fix an $i \le n$. Then $V_i\subset\mathbb{P}^1\times\mathbb{P}^1$ is defined by 
\begin{equation}\label{V_i}
h^{s_i}+P_{s_i-1}(z)h^{s_i-1}+P_{s_i-2}(z)h^{s_i-2}+\cdots+P_1(z)h+P_0(z)=0,
\end{equation}
where for all $j$, $P_j(z)=a_jz+b_j$ for some $a_j,b_j\in\mathbb{C}$ and $b_0=0$. In fact, $(\ref{V_i})$ is invariant under the assignment $(z, h) \mapsto (1/\overline z, 1/\overline h)$ and this implies that for $\lvert z\rvert=1$, if $h_j$, $1\leq j\leq s_i$, are the $s_i$ roots of (\ref{V_i}), then $1/\overline{h_j}$, $1\leq j\leq s_i$, is a permutation of the roots. As a corollary, by replacing the pair $(z, h)$ by $(1/\overline z, 1/\overline h)$, clearing denominators and taking conjugates in (\ref{V_i}), we get
\[
z = -h \left( \frac{\ov a_{s_i-1} + \ov a_{s_i-2} h + \ldots + \ov a_{0} h^{s_i - 1}}{1 + \ov b_{s_i-1} h + \ldots + \ov b_{1} h^{s_i - 1}} \right)
\]
and this implies that for $h = f_i(z)$, $z = R_i(f_i(z))$, where $R_i$ is the rational function of degree $s_i$ as given above.

\medskip

Using Calabi's theorem, it can be shown that every germ 
\[
g:(\mathbb{D},ds^2_d;x_0)\rightarrow (\mathbb{D},ds^2_{d_1};y_{01})\times\cdots\times(\mathbb{D},ds^2_{d_n};y_{0n})
\]
of a holomorphic isometry can be extended over the whole unit disc (ref. Appendix). Let $A\subset\mathbb{P}^1$ be a discrete set such that $\pi$, when restricted to $V\setminus \pi^{-1}(A)$, is a holomorphic covering. For $(z_0,a_1,\ldots, a_n)\in V\setminus\pi^{-1}(A)$ with $\vert z_0\vert\neq 1$, there exists a germ of the graph of a holomorphic function at $z_0$ that takes value $(a_1,\ldots,a_n)$ at $z_0$ and lies in $V$. Since $\mathbb{P}^1\setminus\overline{\mathbb{D}}$ is biholomorphic to the unit disc, it can be shown that every such germ must extend to the whole of $\mathbb{D}$ or $\mathbb{P}^1\setminus\ov{\mathbb{D}}$ depending upon whether $z_0\in\mathbb{D}$ or $z_0\in\mathbb{P}^1\setminus\ov{\mathbb{D}}$. All these observations have been made in \cite{Ng}, but since we are dealing with the weighted case, Section \ref{Appendix} contains a brief summary of the minor modifications need to be made. Therefore, the branch points of $f$ lie on the unit circle. Since every branch point of $f_i$ must be a branch point of $\pi:V\rightarrow\mathbb{P}^1$, the branch points of $f_i$ must lie on the unit circle.

\medskip

Recall that each $s_i$ is a factor of $s$. Within $s$ branches of $f$, each coordinate function $f_i$ has $s_i$ distinct branches and each distinct branch repeats itself $s/s_i$ times. Further, each $f_i$ takes infinite value at $\infty$ on at least one branch among $s_i$ branches. Since $z = \infty$ is not a branch point, there is exactly one branch among the $s_i$ branches such that $f_i(\infty)=\infty$. Thus, each $f_i$ takes an infinite value $s/s_i$ times at $\infty$ within $s$ branches of $f$. 
Recall the functional equation
\[
\prod_{i=1}^n \left(1-\left\lvert f_i(z)\right\rvert^2\right)^{d_i+1}=\left(1-\left\lvert z\right\rvert^2\right)^{d+1} .
\]
Therefore, on each branch of $f$, at least one component function takes an infinite value at $\infty$ and sum of the multiplicities with which $\infty$ is taken by the component functions should equal $2d+2$. Since each $f_i$ must take an infinite value at $z = \infty$ on at least one of the $s$ branches of $f$, it follows that for every $1\leq i\leq n$, there exist $1\leq i=k_1,\ldots,k_{j_i}\leq n$ such that
\begin{equation}
\sum_{r=1}^{j_i} (2d_{k_r}+2)=2d+2.
\end{equation}
In particular, $d_i\leq d$ for every $1\leq i\leq n$.
\end{proof}


\section{Proof of Theorem \ref{1.6}}

Recall Mok's $n$-th root embedding from \cite{Mok1} -- let $\mathbb{H}$ denote the upper half-plane equipped with the hyperbolic metric $ds^2_{\mbb H}$. For $n\geq 2$, define $g:(\mathbb{H},ds^2_{\mathbb{H}})\rightarrow (\mathbb{H},ds^2_{\mathbb{H}})\times\cdots\times(\mathbb{H},ds^2_{\mathbb{H}})$ by
\[
g(z)=(z^{1/n},\gamma z^{1/n},\cdots,\gamma^{n-1}z^{1/n}),
\]
where $\gamma=e^{i\pi/n}$, and if $z=re^{i\theta}$ with $r>0$ and $0<\theta<\pi$ then $z^{1/n}=r^{1/n}e^{i\theta/n}$. Then, $g$ is a holomorphic isometric embedding which is called the $n$-th root embedding. 

\medskip

A complete characterization of all holomorphic isometries from $(\mbb D, ds^2_d)$ to $(\mbb D, ds^2_{d_1}) \times (\mbb D, ds^2_{d_2})$ for non-negative integers $d,d_1$ and $d_2$ was obtained by Chan, Xiao and Yuan in \cite{CXY}. Motivated from it, we are interested in examining isometries from $\mbb D$ into $\mbb D \times \mbb D \times \mbb D$. Let 
$f =(f_1,f_2,f_3): (\mbb D, ds^2_d) \ra (\mbb D, ds^2_{d_1}) \times (\mbb D, ds^2_{d_2}) \times (\mbb D, ds^2_{d_3})$
be a holomorphic isometry, where $d,d_1,d_2$ and $d_3$ are non-negative integers. If any of the component functions of $f$ is constant, then our isometry reduces to either an isometry between a pair of discs or an isometry from a disc to a bidisc. Both cases are known. Therefore, assume that all the component functions of $f$ are non-constant.

\begin{prop}
$f$ extends continuously from $\overline{\mathbb{D}}$ to $\overline{\mathbb{D}}^3$. 
\end{prop}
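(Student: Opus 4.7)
The plan is to exploit the algebraic nature of each component $f_i$ established in the proof of Theorem \ref{1.5}. Recall that for each $i$, the graph of $f_i$ lies on the projective curve $V_i \subset \mathbb{P}^1 \times \mathbb{P}^1$ cut out by
\[
h^{s_i} + P_{s_i-1}(z)h^{s_i-1} + \cdots + P_1(z)h + P_0(z) = 0,
\]
with $\deg P_j \le 1$, so $f_i$ is an algebraic function of $z$. Moreover, the branch points of the first-coordinate projection $\pi_i : V_i \to \mathbb{P}^1$ form a finite subset of the unit circle $\partial\mathbb{D}$.

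Fix $z_0 \in \partial\mathbb{D}$; I need to show that $\lim_{z \to z_0,\, z \in \mathbb{D}} f_i(z)$ exists in $\overline{\mathbb{D}}$ for every $i$. The argument splits into two cases. If $z_0$ is not a critical value of $\pi_i$, the defining polynomial has $s_i$ distinct simple roots at $z_0$; applying the implicit function theorem to each root produces $s_i$ local holomorphic branches $g_1,\ldots,g_{s_i}$ of the algebraic function on a disc $B(z_0,\epsilon)$. On the connected open set $B(z_0,\epsilon)\cap\mathbb{D}$, the holomorphic function $f_i$ satisfies the polynomial relation and (after shrinking $\epsilon$) the $g_k$ are pointwise distinct; by continuity $f_i$ must coincide with exactly one $g_k$ throughout. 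Hence $f_i$ extends holomorphically, and a fortiori continuously, across $z_0$.

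If $z_0$ is a critical value of $\pi_i$, I would invoke the Puiseux theorem: the multi-valued algebraic function above $z_0$ decomposes into finitely many analytic branches, each of the form $h(z) = \phi\bigl((z - z_0)^{1/m}\bigr)$ for some positive integer $m$ and some power series $\phi$ holomorphic near $0$. The set $B(z_0,\epsilon)\cap\mathbb{D}$ is simply connected and does not contain $z_0$, so a single-valued branch of $(z - z_0)^{1/m}$ can be chosen on it; $f_i$ must agree with one such Puiseux expansion on this region. As $z \to z_0$ within $\mathbb{D}$, $(z - z_0)^{1/m} \to 0$, and so $f_i(z) \to \phi(0)$, giving a continuous (in fact Hölder) extension of $f_i$ to $z_0$.

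Since $f_i(\mathbb{D}) \subset \mathbb{D}$, each such boundary value lies in $\overline{\mathbb{D}}$. Applying this to every coordinate and assembling, $f = (f_1,f_2,f_3)$ extends continuously from $\overline{\mathbb{D}}$ to $\overline{\mathbb{D}}^3$. The main technical obstacle is controlling $f_i$ at the finitely many ramification points on $\partial\mathbb{D}$, and this is handled cleanly by Puiseux expansions.
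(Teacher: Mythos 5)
Your proof is correct, but it takes a genuinely different route from the paper's. The paper's argument is a short topological one: since the graph of $f_i$ lies in the closed curve $V_i\subset\mathbb{P}^1\times\mathbb{P}^1$, the cluster set of $f_i$ at a boundary point $z_0$ is contained in the finite fiber $\pi_i^{-1}(z_0)$; cluster sets are connected, and a connected subset of a finite set is a point, so the limit exists. That is the whole proof --- no case distinction, no Puiseux theorem, and no need to know where the branch points lie. Your argument instead exploits the algebraicity of $f_i$ directly: at a regular value of $\pi_i$ the implicit function theorem pins $f_i$ to a single holomorphic branch, and at a critical value a Puiseux expansion (necessarily without negative powers, since $|f_i|<1$ forces the branch through $f_i$ to stay bounded --- a point worth making explicit, as Puiseux expansions of branches tending to $\infty$ do carry negative powers) gives a H\"older-continuous extension. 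What your approach buys is a stronger conclusion: holomorphic extendability of $f_i$ across every non-branch boundary point, which the paper has to re-derive separately in Lemma 4.2 via the covering structure of $\pi_i$, and quantitative (H\"older) control at the finitely many ramification points. What the paper's approach buys is brevity and softness: it needs only that the fibers of $\pi_i$ are finite and that cluster sets are connected. Both are valid proofs of the proposition.
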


\begin{proof}
It is enough to show that each $f_i$ extends continuously from $\overline{\mathbb{D}}$ to $\overline{\mathbb{D}}$. Fix $i\in\{1,2,3\}$. For $z_0\in\partial\mathbb{D}$, let $\pi^{-1}_i(z_0)=\{(z_0,a_1),\ldots,(z_0,a_r)\}$ where $r\leq s_i$. Define $S_i$ to be the set of all the limit points of sequences $\{f_i(z_k)\}$ where $z_k\rightarrow z_0$ as $k\rightarrow\infty$. Then, $S_i\subset \{a_1,\ldots,a_r\}$. But $S_i$, being the cluster set of $z_0$ under $f_i$, must be connected and hence must be a point. Therefore, $f_i$ extends continuously from $\ov{\mathbb{D}}$ to $\ov{\mathbb{D}}$.
\end{proof}

For $i=1,2,3$, define
\[
A_i=\{b\in\partial\mathbb{D}: b \text{ is not a branch point of } f \text{ and }\left\lvert f_i(z)\right\rvert\rightarrow 1 \text{ as }z\rightarrow b\}.
\]

\begin{lem}\label{Open A_i}
Fix $i\in\{1,2,3\}$. Assume that $b\in\partial\mathbb{D}$ is not a branch point of $f$ and $\vert f_i(z)\vert\rightarrow 1$ as $z\rightarrow b$. There exists a neighborhood $U_b\subset\mathbb{C}$ of $b$ such that every $b'\in U_b\cap\partial\mathbb{D}$ is not a branch point of $f$, $\vert f_i(z)\vert\rightarrow 1$ as $z\rightarrow b'$ and the vanishing order of $1-f_i(z)\overline{f_i(b')}$ at $z=b'$ equals to one. In particular, $A_i$ is open.
\end{lem}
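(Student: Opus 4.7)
The plan is to establish (a), (b), (c) in sequence, from which openness of $A_i$ is immediate. Throughout I will exploit the algebraic variety $V_i\subset\mathbb{P}^1\times\mathbb{P}^1$ containing the graph of $f_i$, together with its involution symmetry $(z,h)\mapsto(1/\bar z,1/\bar h)$ established just before the statement.

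For part (a): the branch locus of the finite branched cover $\pi:V\to\mathbb{P}^1$ is a finite subset of $\mathbb{P}^1$, so I take $U_b$ to be a small neighborhood of $b$ in $\mathbb{C}$ avoiding the finitely many other branch points. Since $\pi=\pi_i\circ P_i$, every branch point of $\pi_i$ is a branch point of $\pi$, so $\pi_i$ is unbranched over $U_b$ as well, and $f_i$ extends holomorphically to a full open neighborhood in $\mathbb{C}$ of each $b'\in U_b\cap\partial\mathbb{D}$. For part (b), I argue by contradiction: suppose some such $b'$ has $|f_i(b')|<1$. The involution symmetry of $V_i$ shows that at $z=b'\in\partial\mathbb{D}$ the roots of the polynomial equation \eqref{V_i} are permuted by $h\mapsto 1/\bar h$; hence $1/\overline{f_i(b')}$, of modulus $>1$, is another root. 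Letting $b'\to b$ along $\partial\mathbb{D}$, both $f_i(b')$ and $1/\overline{f_i(b')}$ converge to the single point $f_i(b)\in\partial\mathbb{D}$, so two distinct sheets of $V_i$ must coalesce over $b$. This makes $b$ a branch point of $\pi_i$, hence of $\pi$, contradicting the hypothesis.

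Part (c) is the main technical step. Near $b'\in U_b\cap\partial\mathbb{D}$, $f_i$ is holomorphic and $|f_i(b')|=1$. Suppose for contradiction that the vanishing order of $1-f_i(z)\overline{f_i(b')}$ at $b'$ is $k\ge 2$, so $f_i(z)-f_i(b')=c(z-b')^k+O((z-b')^{k+1})$ with $c\ne 0$. A direct Taylor expansion, using $|f_i(b')|=1$, produces
\[
|f_i(z)|^2-1 \;=\; 2\,\mathrm{Re}\!\bigl(\overline{f_i(b')}\,c\,(z-b')^k\bigr)+O(|z-b'|^{k+1}),
\]
so the real-analytic zero set $\{|f_i|=1\}$ consists, locally at $b'$, of $2k$ smooth arcs meeting at $b'$ in $2k$ distinct directions. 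Two of these arcs are the two sides of $\partial\mathbb{D}$; I then rule out the remaining $2k-2\ge 2$ as follows. Arcs entering $\mathbb{D}$ are forbidden by the maximum principle, since $|f_i|<1$ strictly on $\mathbb{D}$ (as $f_i$ is nonconstant). Arcs entering $\{|z|>1\}$ are forbidden by the Schwarz-type reflection $\tilde f_i(w):=1/\overline{f_i(1/\bar w)}$ coming from the $V_i$-symmetry, which satisfies $|\tilde f_i(w)|>1$ for $|w|>1$ near $b'$. This contradiction forces $k=1$, proving (c). Openness of $A_i$ then follows at once, since for any $b\in A_i$ the same neighborhood $U_b$ witnesses $U_b\cap\partial\mathbb{D}\subset A_i$. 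The real obstacle is this local argument in (c): combining the real-analytic branching of the level set $\{|f_i|=1\}$ at a critical point of $f_i$ with the reflection symmetry of $V_i$ is what delivers the contradiction.
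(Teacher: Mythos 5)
Your proposal is correct in substance, but it reaches the two key conclusions by a genuinely different route from the paper. For the propagation of $\vert f_i\vert\rightarrow 1$ to nearby boundary points, the paper works with the local sheet $h_1$ of $\pi_i:V_i\rightarrow\mathbb{P}^1$ through $(b,f_i(b))$: since $\vert h_1(b)\vert=1$ and $h_1$ is open, $h_1(W)$ contains an arc $\gamma\subset\partial\mathbb{D}$, and the functional equation together with $h_1(W\cap\mathbb{D}^{\pm})\subset\mathbb{D}^{\pm}$ forces $h_1^{-1}(\gamma)\subset\partial\mathbb{D}$; you instead use the involution $(z,h)\mapsto(1/\bar z,1/\bar h)$ to produce the second root $1/\overline{f_i(b')}$ and derive a collision of sheets over $b$. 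Both work, though your contradiction should be phrased with a sequence $b'_k\rightarrow b$ of bad points (a single bad $b'$ cannot be ``let tend to $b$''). The larger divergence is in the order-one claim: the paper gets $f_i'(b')\neq 0$ essentially for free from the injectivity of the branches $h_j$, which comes from the algebraic structure of $V_i$ (the relation $z=R_i(h)$ exhibits $z$ as a single-valued rational function of $h$ on $V_i$, so each branch has a global inverse and hence nonvanishing derivative). Your level-set argument --- $2k$ arcs of $\{\vert f_i\vert=1\}$ at a critical point of order $k$, of which only two can be absorbed by $\partial\mathbb{D}$ and none can enter $\mathbb{D}$ or $\mathbb{D}^-$ --- is a valid and more hands-on substitute, but it carries one step you should make explicit: to assert $\vert f_i\vert>1$ on $W\cap\mathbb{D}^-$ you must identify the analytic continuation $h_1$ of $f_i$ across the circle with the reflection $w\mapsto 1/\overline{f_i(1/\bar w)}$; a priori the involution could send the sheet $W_1$ to a different sheet, and the identification requires that the two holomorphic functions agree on the boundary arc (where $\vert h_1\vert=1$ by your step (b)) and hence everywhere by boundary uniqueness. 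With that supplied, your argument is complete; the paper's route is shorter because the injectivity of the branches does all the work at once, while yours avoids invoking the rational inverse $R_i$ at the cost of the local analysis of the level set.
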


\begin{proof}
We can assume, without loss of generality, that $f(0)=0$, as the statement remains true after a reparametrization. 

\medskip

Since every branch point of $f_i$ is a branch point of $f$, $b$ is not a branch point of $f_i$. Let, $\pi_i^{-1}(b)=\{(b,a_1),\ldots,(b,a_{s_i})\}$. Recall that $\pi_i\vert_{\pi_i^{-1}(\mathbb{P}^1\setminus A)}$ is a holomorphic covering map. Therefore, there exists injective holomorphic function $h_j$, $1\leq j\leq s_i$ on a neighborhood $W\subset\mathbb{C}$ of $b$ such that $h_j(b)=a_j$ and $(z,h_j(z))\in V_i$ for all $z\in W$ and $1\leq j\leq s_i$. Choose $W$ (shrink, if necessary) so that the closures of $W_j=h_j(W)$ are mutually disjoint, and $1/\bar{z}\in W$ whenever $z\in W$. 
There exists $1\leq j_i\leq s_i$ such that $h_{j_i}\equiv f_i$ on $W\cap\mathbb{D}$. Without loss of generality, let $j_i=1$.

\medskip

Put $\mathbb{D}^+=\mathbb{D}$ and $\mathbb{D}^-=\mathbb{P}^1\setminus\ov{\mathbb{D}}$. Recall that $(1/\bar{z},1/\bar{h})\in V_i$ whenever $(z,h)\in V_i$.
Therefore, $U^+=h_1(W\cap\mathbb{D}^+)$ and $U^-=h_1(W\cap\mathbb{D}^-)$ are open and disjoint sets lying in $\mathbb{D}^+$ and $\mathbb{D}^-$ respectively. Since $\vert h_1(b)\vert=1$, it follows from Open mapping theorem that $h_1(W)$ consists of an open arc $\gamma$ of $\partial\mathbb{D}$ with $h_1(b)\in\gamma$. Now $h_1$ is invertible, and the functional equation gives $h_1^{-1}(\gamma)\subset\partial\mathbb{D}$.
Hence, there exists an open neighborhood $U_b\subset\mathbb{C}$ of $b$ such that $\vert f_i(z)\vert\rightarrow 1$ as $z\rightarrow b'$ for any $b'\in U_b\cap \partial \mathbb{D}$. 

\medskip

Further, shrink $U_b$, if necessary, so that every $b'\in U_b\cap\partial\mathbb{D}$ is not a branch point of $f$ and $f_i'(b')\neq 0$. The latter ensures that the vanishing order of $1-f_i(z)\overline{f_i(b')}$ at $z=b'$ equals to one.
\end{proof}

We can assume, without loss of generality, that $f(0)=0$, as we shall characterize $f$ up to reparametrization.

\medskip

Since $f(0)=0$, we have the functional equation 
\[
\prod_{i=1}^3 (1-\vert f_i(z)\vert^2)^{d_i+1}=(1-\vert z\vert^2)^{d+1}.
\]
Therefore, there must exist some $i$ such that $\vert f_i(z)\vert\rightarrow 1$ as $\vert z\vert \rightarrow 1$. Since there exist only finitely many branch points of $f$ on $\partial\mathbb{D}$, at least one $A_i$ must be non-empty. The proof now divides into the following cases.

\begin{proof}[Case 1.] $A_1\cap A_2\cap A_3\neq \emptyset$.

\medskip

That is, there is a point $b\in\partial \mathbb{D}$ which is not a branch point of $f$ and $\vert f_i(z)\vert\rightarrow 1$ as $z\rightarrow b$ for all $i=1,2,3$. Since $f(0)=0$, we have
\[
\prod_{i=1}^3\left(1-f_i(z)\overline{f_i(b)}\right)^{d_i+1}=\left(1-z\bar{b}\right)^{d+1}.
\]
Note that the vanishing order of $1-f_i(z)\overline{f_i(b)}$ at $z=b$ equals to one for all $i=1,2,3$. 
On comparing the vanishing order at $b$ in the equation, we obtain $\sum_{i=1}^3(d_i+1)=d+1$. Therefore, by Theorem \ref{1.5}, there exist unimodular constants $\alpha_1,\alpha_2$ and $\alpha_3$ such that $f(z)=(\alpha_1 z, \alpha_2 z,\alpha_3 z)$, after a possible reparametrization.
\end{proof}

\begin{proof}[Case 2.] Each $A_i$ is non-empty, and all $A_i$ are mutually disjoint.

\medskip

That is, there are three distinct points $b_1,b_2$ and $b_3$ on $\partial\mathbb{D}$ which are not branch points of $f$ such that for every $i=1,2,3$
\[
\vert f_i(z)\vert \rightarrow 1\quad\text{and}\quad \vert f_j(z)\vert\rightarrow r_j^i<1\quad \text{for } j\neq i\quad \text{as}\quad z\rightarrow b_i.
\]
Since $f(0)=0$, we have
\[
\left(1-f_1(z)\overline{f_1(b_i)}\right)^{d_1+1}\left(1-f_2(z)\overline{f_2(b_i)}\right)^{d_2+1}\left(1-f_3(z)\overline{f_3(b_i)}\right)^{d_3+1}=\left(1-z\bar{b_i}\right)^{d+1}.
\]
Note that the vanishing order of $1-f_i(z)\overline{f_i(b_i)}$ at $z=b_i$ equals to one for all $i=1,2,3$.
On comparing the vanishing order at $b_i$ in the equation, we obtain $d_1=d_2=d_3=d$. Therefore, $f$ is an isometry from $(\mathbb{D},ds^2_{\mathbb{D}})$ to $(\mathbb{D},ds^2_{\mathbb{D}})^3$. By Ng \cite{Ng}, it follows that, up to reparametrization, $f$ is either the cube root embedding or $f$ can be factorized as 
\[
f(z)=\left(\alpha_1(z),\alpha_2(\beta_1(z)),\beta_2(\beta_1(z))\right),
\]
where $z\mapsto (\alpha_i(z),\beta_i(z))$ is a holomorphic isometry from $(\mathbb{D},ds^2_{\mathbb{D}})$ to $(\mathbb{D},ds^2_{\mathbb{D}})^2$ for $i=1,2$. 

\medskip

Conversely, if up to a reparametrization, $f$ is the cube root embedding, then each $A_i$ is non-empty and all $A_i$ are mutually disjoint. Let us now look at the latter possibility. Assume that $f=(\alpha_1,\alpha_2\circ\beta_1,\beta_2\circ\beta_1)$.
For $i=1,2$, define
\[
\Gamma_{\alpha_i}=\{b\in\partial\mathbb{D}: b \text{ is not a branch point of } z\mapsto (\alpha_i(z),\beta_i(z)) \text{ and }\left\lvert \alpha_i(z)\right\rvert\rightarrow 1 \text{ as }z\rightarrow b\},
\]
and similarly
\[
\Gamma_{\beta_i}=\{b\in\partial\mathbb{D}: b \text{ is not a branch point of } z\mapsto (\alpha_i(z),\beta_i(z)) \text{ and }\left\lvert \beta_i(z)\right\rvert\rightarrow 1 \text{ as }z\rightarrow b\}.
\]
By Ng \cite{Ng}, every holomorphic isometry from $(\mathbb{D},ds^2_{\mathbb{D}})$ to $(\mathbb{D},ds^2_{\mathbb{D}})^2$ is, up to reparametrizations, a square root embedding. Therefore, $\Gamma_{\alpha_i}$ and $\Gamma_{\beta_i}$ are non-empty disjoint subsets of $\partial\mathbb{D}$ for both $i=1,2$.  So, $A_1$ is non-empty and all $A_i$ are mutually disjoint.
For both $A_2$, $A_3$ to be non-empty, $\beta_1(\Gamma_{\beta_1})$ must have non-empty intersection with both $\Gamma_{\alpha_2}$ and $\Gamma_{\beta_2}$.

Therefore, if we take any reparametrization of $(\alpha_1,\alpha_2\circ\beta_1,\beta_2\circ\beta_1)$, then all $A_i$ are mutually disjoint but for each $A_i$ to be non-empty,  $\beta_1(\Gamma_{\beta_1})$ must have non-empty intersection with both $\Gamma_{\alpha_2}$ and $\Gamma_{\beta_2}$.
\begin{exam}\label{example}
Let $g=(g_1,g_2):(\mathbb{H},ds^2_{\mathbb{H}})\rightarrow (\mathbb{H},ds^2_{\mathbb{H}})^2$ be the square root embedding defined by
\[
g(z)= (z^{1/2},e^{i\pi/2} z^{1/2}),
\]
where if $z=r e^{i\theta}$ with $r>0$ and $0<\theta<\pi$ then $z^{1/2}=r^{1/2}e^{i\theta/2}$. The points $0$ and $\infty$ are the branch points of $g$. As $z\rightarrow z_0\in\partial\mathbb{H}$, where $z_0$ is not a branch point of $g$, 
\[
g_1(z)\rightarrow \partial\mathbb{H}=\mathbb{R} \,\text{ only if }\, z_0\in\mathbb{R}^+ \quad\text{and}\quad g_2(z)\rightarrow \partial\mathbb{H}=\mathbb{R} \,\text{ only if }\, z_0\in\mathbb{R}^-.
\]
Also, $g_1(\mathbb{R}^+)=\mathbb{R}^+$ and $g_2(\mathbb{R}^-)=\mathbb{R}^-$.
Let $\varphi:\mathbb{H}\rightarrow\mathbb{D}$ be the biholomorphism given by
\[
\varphi(z)=\frac{z-i}{1-iz},\quad z\in\mathbb{H}.
\]
The map $G=(G_1,G_2)=(\varphi,\varphi)\circ g\circ \varphi^{-1}:(\mathbb{D},ds^2_{\mathbb{D}})\rightarrow (\mathbb{D},ds^2_{\mathbb{D}})^2$ is a holomorphic isometry. Let $\mathcal{H}^+$ and $\mathcal{H}^-$ denote the right and left half planes respectively. Then, $\varphi(\mathbb{R}^+)=\partial\mathbb{D}\cap \mathcal{H}^+$ and $\varphi(\mathbb{R}^-)=\partial\mathbb{D}\cap \mathcal{H}^-$. Therefore, $-i$ and $i$ are the branch points of $G$. And, for every $z_0\in\partial\mathbb{D}$ which is not a branch point of $G$, we have as $z\rightarrow z_0\in\partial\mathbb{D}$ that
\[
G_1(z)\rightarrow \partial\mathbb{D}\,\text{ only if }\, z_0\in \partial\mathbb{D}\cap\mathcal{H}^+ \quad\text{and}\quad G_2(z)\rightarrow \partial\mathbb{D}\,\text{ only if }\, z_0\in \partial\mathbb{D}\cap\mathcal{H}^-.
\]
Also, $G_1(\partial\mathbb{D}\cap\mathcal{H}^+)=\partial\mathbb{D}\cap\mathcal{H}^+$ and $G_2(\partial\mathbb{D}\cap\mathcal{H}^-)=\partial\mathbb{D}\cap\mathcal{H}^-$. 
Take $(\alpha_1,\beta_1)=G$ and $(\alpha_2,\beta_2)=G\circ (e^{i\pi/2})$. Here, $\beta_1(\Gamma_{\beta_1})=\Gamma_{\beta_1}=\partial\mathbb{D}\cap\mathcal{H}^-$, $\Gamma_{\alpha_2}=e^{-i\pi/2}\,\Gamma_{\alpha_1}=e^{-i\pi/2}(\partial\mathbb{D}\cap\mathcal{H}^+)$ and $\Gamma_{\beta_2}=e^{-i\pi/2}\,\Gamma_{\beta_1}=e^{-i\pi/2}(\partial\mathbb{D}\cap\mathcal{H}^-)$. Therefore, $\beta_1(\Gamma_{\beta_1})$ has non-empty intersection with both $\Gamma_{\alpha_2}$ and $\Gamma_{\beta_2}$.
\end{exam}
Hence, up to reparametrization, $f$ is either the cube root embedding or $f$ can be factorized as 
\[
f(z)=\left(\alpha_1(z),\alpha_2(\beta_1(z)),\beta_2(\beta_1(z))\right)
\]
where $z\mapsto (\alpha_i(z),\beta_i(z))$ is a holomorphic isometry from $(\mathbb{D},ds^2_{\mathbb{D}})$ to $(\mathbb{D},ds^2_{\mathbb{D}})^2$ for both $i=1,2$ such that $\beta_1(\Gamma_{\beta_1})$ has non-empty intersection with both $\Gamma_{\alpha_2}$ and $\Gamma_{\beta_2}$.
\end{proof}

\begin{proof}[Case 3.] Exactly one of the sets $A_i$ is empty, and the other two are mutually disjoint. 

\medskip

Without loss of generality, assume that $A_1\cap A_2=\emptyset=A_3$, but $A_1,A_2\neq\emptyset$.
That is, there are two distinct points $b_1,b_2\in\partial \mathbb{D}$ which are not branch points of $f$ such that for $j=1,2$ and $i\neq j$
\[
\vert f_j(z)\vert \rightarrow 1 \quad\text{and}\quad \vert f_i(z)\vert\rightarrow r^i_j<1 \quad \text{as}\quad z\rightarrow b_j.
\]
Since $f(0)=0$, we have
\[
\left(1-f_1(z)\overline{f_1(b_j)}\right)^{d_1+1}\left(1-f_2(z)\overline{f_2(b_j)}\right)^{d_2+1}\left(1-f_3(z)\overline{f_3(b_j)}\right)^{d_3+1}=\left(1-z\bar{b_j}\right)^{d+1}.
\]
Note that the vanishing order of $1-f_j(z)\overline{f_j(b_j)}$ at $z=b_j$ equals to one for both $j=1,2$. On comparing the vanishing order at $b_j$ in the equation,
we obtain $d_1=d_2=d$. It, therefore, follows from Theorem \ref{1.5} that $d_3=d$.
Hence, $f$ is an isometry from $(\mathbb{D},ds^2_{\mathbb{D}})$ to $(\mathbb{D},ds^2_{\mathbb{D}})^3$. By Ng \cite{Ng}, we conclude that, up to reparametrizations, $f$ is either the cube root embedding or $f$ can be factorized as 
\[
f(z)=\left(\alpha_1(z),\alpha_2(\beta_1(z)),\beta_2(\beta_1(z))\right)
\]
where $z\mapsto (\alpha_i(z),\beta_i(z))$ is a holomorphic isometry from $(\mathbb{D},ds^2_{\mathbb{D}})$ to $(\mathbb{D},ds^2_{\mathbb{D}})^2$ for $i=1,2$. 

\medskip

For every reparametrization of the cube root embedding each $A_i$ is non-empty. This rules out the first possibility as one of the sets $A_i$ must be empty.

\medskip

Let us now look at the other possibility. Assume that $f=(\alpha_1,\alpha_2\circ\beta_1,\beta_2\circ\beta_1)$. All $A_i$ are mutually disjoint and $A_1\neq \emptyset$. For $A_3=\emptyset$, we must have $\beta_1(\Gamma_{\beta_1})\cap\Gamma_{\beta_2}=\emptyset$. Since $\Gamma_{\alpha_2}\cup\Gamma_{\beta_2}$ is $\partial\mathbb{D}$ minus finitely many points (branch points), we must have $\beta_1(\Gamma_{\beta_1})\cap \Gamma_{\alpha_2}\neq \emptyset$ and therefore $A_2\neq \emptyset$.

\medskip

Therefore, if we take any reparametrization of $(\alpha_1,\alpha_2\circ\beta_1,\beta_2\circ\beta_1)$, then all $A_i$ are mutually disjoint and at least two of the sets $A_i$ are non-empty. For one of the sets $A_i$ to be empty, we must have that either $\beta_1(\Gamma_{\beta_1})\cap\Gamma_{\alpha_2}=\emptyset$ or $\beta_1(\Gamma_{\beta_1})\cap\Gamma_{\beta_2}=\emptyset$. 
\begin{exam}
Take $(\alpha_1,\beta_1)=G$ and $(\alpha_2,\beta_2)=G\circ (e^{i\pi})$, where $G$ is as in Example \ref{example}. Here, $\beta_1(\Gamma_{\beta_1})=\Gamma_{\beta_1}=\partial\mathbb{D}\cap\mathcal{H}^-$,
$\Gamma_{\alpha_2}=e^{-i\pi}\,\Gamma_{\alpha_1}=\partial\mathbb{D}\cap\mathcal{H}^-$
and 
$\Gamma_{\beta_2}=e^{-i\pi}\,\Gamma_{\beta_1}=\partial\mathbb{D}\cap\mathcal{H}^+$. 
Therefore, $\beta_1(\Gamma_{\beta_1})\cap \Gamma_{\beta_2}=\emptyset$.
\end{exam}
Hence, up to reparametrizations,
\[
f(z)=\left(\alpha_1(z),\alpha_2(\beta_1(z)),\beta_2(\beta_1(z))\right),
\]
where for both $i=1,2$, the map
$
z\mapsto (\alpha_i(z),\beta_i(z))
$
is an isometry from $(\mathbb{D},ds^2_{\mathbb{D}})$ to $(\mathbb{D},ds^2_{\mathbb{D}})^2$ such that either $\beta_1(\Gamma_{\beta_1})\cap\Gamma_{\alpha_2}=\emptyset$ or $\beta_1(\Gamma_{\beta_1})\cap\Gamma_{\beta_2}=\emptyset$.
\end{proof}

\begin{proof}[Case 4.] $A_1\cap A_2\cap A_3=\emptyset$, but all the sets $A_1\cap A_2$, $A_2\cap A_3$ and $A_3\cap A_1$ are non-empty.

\medskip

That is, there are three distinct points $b_1,b_2$ and $b_3$ on $\partial\mathbb{D}$ which are not the branch points of $f$ such that for every $j=1,2,3$ and $i\neq j$
\[
\vert f_i(z)\vert \rightarrow 1\quad\text{and}\quad \vert f_j(z)\vert\rightarrow r_j<1\quad \text{as}\quad z\rightarrow b_j.
\]
Then we have
\[
\left(1-f_1(z)\overline{f_1(b_j)}\right)^{d_1+1}\left(1-f_2(z)\overline{f_2(b_j)}\right)^{d_2+1}\left(1-f_3(z)\overline{f_3(b_j)}\right)^{d_3+1}=\left(1-z\bar{b_j}\right)^{d+1}.
\]
The vanishing order of $1-f_i(z)\overline{f_i(b_j)}$ at $z=b_j$ equals to one for all $i\neq j$. On comparing the vanishing order at $b_j$ in the equation,
we obtain that $\sum\limits_{i=1,2}(d_i+1)=\sum\limits_{i=2,3}(d_i+1)=\sum\limits_{i=1,3}(d_i+1)=d+1$. That is,
\[
d_1=d_2=d_3=r\quad \text{and} \quad d=2r+1
\]
for some non-negative integer $r$. So, the functional equation becomes
\[
\prod_{i=1}^3 \left( 1-\vert f_i(z)\vert^2\right)=(1-\vert z\vert^2)^2.
\]
Therefore, $f$ is an isometry from $(\mathbb{D},ds^2_{\mathbb{D}})$ to $(\mathbb{D},ds^2_{\mathbb{D}})^3$ with the isometric constant $k=2$ (see \cite{Ng}). By Ng \cite{Ng}, it follows that, up to reparametrizations, 
\[
f(z)=(z,\alpha(z),\beta(z)),
\]
where $z\mapsto (\alpha(z),\beta(z))$ is an isometry from $(\mathbb{D},ds^2_{\mathbb{D}})$ to $(\mathbb{D},ds^2_{\mathbb{D}})^2$. Since  $\Gamma_{\alpha}\cap \Gamma_{\beta}=\emptyset$, at least one of the sets $A_i\cap A_j$ is empty. Hence, a contradiction. Therefore, this case is not possible.
\end{proof}

\begin{proof}[Case 5.] Exactly two pairs (out of the possible three) have non-empty intersection.  

\medskip

Without loss of generality, assume that $A_1\cap A_3\neq \emptyset\neq A_2\cap A_3$, but $A_1\cap A_2=\emptyset$.
That is, there are two distinct points $b_1,b_2\in\partial \mathbb{D}$ which are not the branch points of $f$ such that for $j=1,2$ and $i\neq j$
\[
\vert f_i(z)\vert \rightarrow 1 \quad\text{and}\quad \vert f_j(z)\vert\rightarrow r_j<1 \quad \text{as}\quad z\rightarrow b_j.
\]
Then we have
\[
\left(1-f_1(z)\overline{f_1(b_j)}\right)^{d_1+1}\left(1-f_2(z)\overline{f_2(b_j)}\right)^{d_2+1}\left(1-f_3(z)\overline{f_3(b_j)}\right)^{d_3+1}=\left(1-z\bar{b_j}\right)^{d+1}.
\]
The vanishing order of $1-f_i(z)\overline{f_i(b_j)}$ at $z=b_j$ equals to one for all $i\neq j$ and $j=1,2$. On comparing the vanishing order at $b_j$ in the equation,
we obtain 
\[
d+1=\sum\limits_{i=1,3}(d_i+1)=\sum\limits_{i=2,3}(d_i+1).
\]
Therefore, $d_1=d_2$.
Let if possible, $A_1\setminus A_3\neq \emptyset$. Since $A_1\cap A_2=\emptyset$, we have $d_1=d$. But then $d_3=-1$, which is a contradiction. Similarly, we cannot have $A_2\setminus A_3\neq \emptyset$. Thus, both the sets $A_1$ and $A_2$ are contained in $A_3$. 
Note that we also have $A_3\setminus (A_1\cup A_2)= \emptyset$. Otherwise $d_3=d$ and therefore $d_1=d_2=-1$, hence a contradiction. Therefore, for every $b\in\partial\mathbb{D}$ which is not a branch point of $f$, we have
\[
\vert f_3(z)\vert \rightarrow 1\quad\text{as}\quad z\rightarrow b.
\]
Since $f_3$ is a continuous function from $\overline{\mathbb{D}}$ to $\overline{\mathbb{D}}$ and $f$ has only finitely many branch points, $\vert f_3(z)\vert\rightarrow 1$ whenever $z\rightarrow b\in\partial\mathbb{D}$. Thus, $f_3$ is a proper holomorphic map and hence a finite Blaschke product. As noted earlier, $f_3$ cannot have poles in $\mathbb{C}$. Therefore, $f_3(z)=cz^k$ for some $\vert c\vert=1$ and $k \ge 1$. Further, there exists a rational function $R$ such that $R(f_3(z))=R(c z^k)=z$. Let $m$ be the degree of $R$. Then the degree of $R(c z^k)$ equals $mk$. Hence, $mk=1$. So, $k=1$ (and $m=1$).
Thus, the functional equation becomes
\[
\prod_{i=1}^2\left(1-\vert f_i(z)\vert^2\right)^{d_1+1}(1-\vert z\vert^{2})^{d_3+1} =(1-\vert z\vert^2)^{d_1+d_3+2},
\]
which simplifies as
\[
\prod_{i=1}^2\left(1-\vert f_i(z)\vert^2\right)=(1-\vert z\vert^2).
\]
Therefore, the map $(f_1,f_2):(\mathbb{D},ds^2_{\mbb{D}})\rightarrow(\mathbb{D},ds^2_{\mbb{D}})^2$ is a holomorphic isometry and $f_3(z)=cz$ for some $\vert c\vert=1$. Note that, we indeed have $A_1\cap A_3\neq \emptyset\neq A_2\cap A_3$ and $A_1\cap A_2=\emptyset$.

\medskip

Hence,  
\[
d+1=\sum\limits_{i=1,3}(d_{k_i}+1)=\sum\limits_{i=2,3}(d_{k_i}+1),
\]
where $k_i\neq k_j$ for $i\neq j$ and $k_i\in\{1,2,3\}$. Up to a reparametrization, we have 
\[
f(z)=(z,\alpha(z),\beta(z)),
\]
where $z\mapsto (\alpha(z),\beta(z))$ is an isometry from $(\mathbb{D},ds^2_{\mbb D})$ to $(\mathbb{D},ds^2_{\mbb D})^2$.
\end{proof}

\begin{proof}[Case 6.] All $A_i$ are non-empty, and exactly one pair (out of three) has a non-empty intersection. 

\medskip

Without loss of generality, let $A_1\cap A_2\neq \emptyset\neq A_3$, but $A_1\cap A_3=\emptyset=A_2\cap A_3$.
That is, there are two distinct points $b_1,b_2\in\partial \mathbb{D}$ which are not the branch points of $f$ such that for $i=1,2$, we have
\begin{enumerate}
\item[(i)] $\vert f_i(z)\vert \rightarrow 1$ and $\vert f_3(z)\vert\rightarrow r_1<1$ as $z\rightarrow b_1$,
\item[(ii)] $\vert f_i(z)\vert \rightarrow r^i_2<1$ and $\vert f_3(z)\vert\rightarrow 1$ as $z\rightarrow b_2$.
\end{enumerate}
Since $f(0)=0$, we have
\[
\left(1-f_1(z)\overline{f_1(b_j)}\right)^{d_1+1}\left(1-f_2(z)\overline{f_2(b_j)}\right)^{d_2+1}\left(1-f_3(z)\overline{f_3(b_j)}\right)^{d_3+1}=\left(1-z\bar{b_j}\right)^{d+1}.
\]
Note that for $i=1,2$, the vanishing orders of $1-f_i(z)\overline{f_i(b_1)}$ at $z=b_1$ and $1-f_3(z)\overline{f_3(b_2)}$ at $z=b_2$ are equal to one. On comparing the vanishing orders at $b_1$ and $b_2$ in the equation,
we obtain
\begin{equation}\label{relation}
\sum\limits_{i=1,2}(d_i+1)=d+1\quad\text{and}\quad d_3=d.
\end{equation}
As noted before, since
\[
\prod_{i=1}^3 (1-\vert f_i(z)\vert^2)^{d_i+1} = (1-\vert z\vert^2)^{d+1},
\]
at least one component function of $f$ takes an infinite value at $\infty$ on each branch of $f$ and sum of the multiplicities with which $\infty$ is taken by the component functions should equal $2d+2$. Because of the relation $(\ref{relation})$, there are only two possibilities on each branch of $f$. First, both $f_1$ and $f_2$ take an infinite value but $f_3$ takes a finite value at $\infty$. Second, only $f_3$ takes an infinite value at $\infty$. 
As noted in the proof of Theorem \ref{1.5}, each $f_i$ has $s_i$ distinct branches and each distinct branch repeats itself $s/s_i$ times. Further, each $f_i$ must take an infinite value at $\infty$ on exactly one of the distinct $s_i$ branches of $f_i$. 

\medskip

Since both $f_1$ and $f_2$ take an infinite value at $\infty$ together, we must have $s_1 = s_2$. Therefore, $f_1$ and $f_2$ take an infinite value on $s/s_1$ branches of $f$, and $f_3$ takes an infinite value at $\infty$ on $s/s_3$ branches of $f$. Hence, we conclude that
\[
s = \frac{s}{s_1} + \frac{s}{s_3}, \quad\text{that is,}\quad 1 = \frac{1}{s_1} + \frac{1}{s_3}
\]
Since $s_1$ and $s_3$ are positive integers, the only possible value for $s_1$ and $s_3$ is $2$. Thus, $s_1 = s_2 = s_3=2$. 

\medskip

Since the sheeting number of $f_3$ is $2$, there exists an isometry $z\mapsto (\alpha(z),\beta(z))$ from $(\mathbb{D}, ds^2_{\mathbb{D}})$ to $(\mathbb{D}, ds^2_{\mathbb{D}})^2$ such that $f_3 = \beta$ (see \cite{Ng}). Also, $(1-\vert \alpha(z)\vert^2)(1-\vert \beta(z) \vert^2) = (1 -\vert z\vert^2)$. Therefore,
\[
(1-\vert\beta(z)\vert^2)^{d+1}\,\prod_{i=1}^2 (1-\vert f_i(z)\vert^2)^{d_i+1} = (1-\vert \alpha(z)\vert^2)^{d+1}(1-\vert \beta(z) \vert^2)^{d+1}.
\]
That is,
\[
\prod_{i=1}^2 (1-\vert f_i(z)\vert^2)^{d_i+1} = (1-\vert \alpha(z) \vert^2)^{d+1}.
\]
Since $0$ is not a branch point, there exists a local inverse of $\alpha$ on a neighborhood $U\subset\mathbb{D}$ of $0$. So,
\[
\prod_{i=1}^2 (1-\vert (f_i\circ \alpha^{-1})(z)\vert^2)^{d_i+1} = (1-\vert z \vert^2)^{d+1}, \quad z\in U
\]
where $(d_1+1) + (d_2 +1) = d+1$. Therefore, $z\mapsto (f_1\circ \alpha^{-1},f_2\circ\alpha^{-1})$ is a germ of a holomorphic isometry from $(\mathbb{D},ds^2_{d};0)$ to $(\mathbb{D},ds^2_{d_1};0)\times(\mathbb{D},ds^2_{d_2};0)$. This germ extends to a holomorphic isometry to the entire unit disc and maps it into the bidisc (see Section \ref{Appendix}). Since $(d_1+1) + (d_2 +1) = d+1$, we therefore conclude from Case $1$ that $(f_i\circ \alpha^{-1})(z) = c_i z$ for some unimodular constants $c_i\in\mathbb{C}$. Thus, we have proved that
\[
(f_1,f_2,f_3) = (c_1\alpha, c_2\alpha,\beta)
\]
for some isometry $z\mapsto (\alpha(z),\beta(z))$ from $(\mathbb{D}, ds^2_{\mathbb{D}})$ to $(\mathbb{D}, ds^2_{\mathbb{D}})^2$ and unimodular constants $c_i\in\mathbb{C}$. Note that, we indeed have $A_1\cap A_2\neq \emptyset\neq A_3$ and $A_1\cap A_3=\emptyset=A_2\cap A_3$.

\medskip

Hence,
\[
d_{k_1}=d
\quad\text{and}\quad
\sum\limits_{i=2,3}(d_{k_i}+1)=d+1,
\]
where $k_i\neq k_j$ for $i\neq j$ and $k_i\in\{1,2,3\}$. Further, permutations of functions of the form $f=(c\alpha,\alpha,\beta)$, where $c\in\mathbb{C}$ is a unimodular constant and $z\mapsto(\alpha(z),\beta(z))$ is a holomorphic isometry from $(\mathbb{D},ds^2_{\mathbb{D}})$ to $(\mathbb{D},ds^2_{\mbb D})^2$, are all the examples of isometries in this case.
\end{proof}

\begin{proof}[Case 7.] Exactly one of the sets $A_i$ is empty, and the other two have non-empty intersection.

\medskip

Without loss of generality, let $A_1\cap A_2\neq \emptyset$ and $A_3=\emptyset$.
That is, there is a point $b\in\partial\mathbb{D}$ which is not a branch point of $f$ such that $\vert f_i(z)\vert\rightarrow 1$ for $i=1,2$ and $\vert f_3(z)\vert\rightarrow r<1$ as $z\rightarrow b$. 
Since $f(0)=0$, we have
\[
\left(1-f_1(z)\overline{f_1(b)}\right)^{d_1+1}\left(1-f_2(z)\overline{f_2(b)}\right)^{d_2+1}\left(1-f_3(z)\overline{f_3(b)}\right)^{d_3+1}=\left(1-z\bar{b}\right)^{d+1}.
\]
The vanishing order of $1-f_i(z)\overline{f_i(b)}$ at $z=b$ equals to one for both $i=1,2$. Therefore, on comparing the vanishing order at $b$ in the equation, we obtain 
\[
\sum_{i=1,2}(d_i+1)=d+1. 
\]
Observe that if $A_1\setminus A_2\neq\emptyset$, then $d_1=d$, and hence $d_2=-1$, which is a contradiction. Therefore, $A_1\setminus A_2=\emptyset$. Similarly, $A_2\setminus A_1=\emptyset$ and hence $A_1=A_2$. 

\medskip

By the hypothesis, $A_3=\emptyset$. Thus, if $b\in\partial \mathbb{D}$ is not a branch point of $f$, then $\vert f_i(z)\vert\rightarrow 1$ as $z\rightarrow b$ for both $i=1,2$. Since $f_1$ and $f_2$ are continuous functions from $\overline{\mathbb{D}}$ to $\overline{\mathbb{D}}$ and $f$ has only finitely many branch points, $\vert f_i(z)\vert\rightarrow 1$ whenever $z\rightarrow b\in\partial\mathbb{D}$ for both $i=1,2$. Thus, $f_1$ and $f_2$ are proper holomorphic maps and hence finite Blaschke products. As noted earlier, neither $f_1$ nor $f_2$ can have poles in $\mathbb{C}$. Therefore, $f_1(z)=c_1z^{m}$ and $f_2(z)=c_2 z^{n}$ for some $\vert c_i\vert=1$ and $m,\,n\in\mathbb{Z}^+$. Thus, the functional equation becomes
\[
\left(\frac{1}{1-\vert f_3(z)\vert^2}\right)^{d_3+1}=\left(\frac{1-\vert z\vert^{2m}}{1-\vert z\vert^2}\right)^{d_1+1}\left(\frac{1-\vert z\vert^{2n}}{1-\vert z\vert^2}\right)^{d_2+1}.
\]
The right hand side is a finite sum of the norm squares of monomial functions and the left hand side is an infinite sum of the norm squares of linearly independent holomorphic functions, as $f_3$ is non-constant. Hence, a contradiction. Therefore, this case is not possible.
\end{proof}

\begin{proof}[Case 8.] Exactly two of the sets $A_i$ are empty.

\medskip

Without loss of generality, let $A_1=A_2=\emptyset$.
That is, if $b\in\partial \mathbb{D}$ is not a branch point of $f$, then $\vert f_i(z)\vert\rightarrow r_i(b)<1$ for $i=1,2$ and $\vert f_3(z)\vert\rightarrow 1$ as $z\rightarrow b$. 
Since $f(0)=0$, we have
\[
\left(1-f_1(z)\overline{f_1(b)}\right)^{d_1+1}\left(1-f_2(z)\overline{f_2(b)}\right)^{d_2+1}\left(1-f_3(z)\overline{f_3(b)}\right)^{d_3+1}=\left(1-z\bar{b}\right)^{d+1}.
\]
The vanishing order of $1-f_3(z)\overline{f_3(b)}$ at $z=b$ equals to $1$. Therefore, on comparing the vanishing order at $b$ in the equation,
we have $d_3=d$. Since $f_3$ is a continuous function from $\overline{\mathbb{D}}$ to $\overline{\mathbb{D}}$ and $f$ has only finitely many branch points, $\vert f_3(z)\vert\rightarrow 1$ whenever $z\rightarrow b\in\partial\mathbb{D}$. Thus, $f_3$ is a proper holomorphic map and hence a finite Blaschke product. As observed earlier, $f_3$ cannot have poles in $\mathbb{C}$. Therefore, $f_3(z)=cz^k$ for some $\vert c\vert=1$ and $k\in\mathbb{Z}^+$. Thus, the functional equation becomes
\[
\left(\frac{1}{1-\vert f_1(z)\vert^2}\right)^{d_1+1}\left(\frac{1}{1-\vert f_2(z)\vert^2}\right)^{d_2+1}=\left(\frac{1-\vert z\vert^{2k}}{1-\vert z\vert^2}\right)^{d+1}.
\]
The right hand side is a finite sum of the norm squares of monomial functions and the left hand side is an infinite sum of the norm squares of linearly independent holomorphic functions, as $f_1$ and $f_2$ are non-constant. Hence, a contradiction. Therefore, this case is not possible.
\end{proof}

\section{Proof of Theorem \ref{1.7}}

\noindent Consider the unit ball $\mathbb{B}^n \subset \mathbb{C}^n$. The Bergman kernel of $\mathbb{B}^n$ is 
\[
K_{\mathbb{B}^n}(z,w)=\frac{n!}{\pi^n}\frac{1}{(1-z\cdot\ov{w})^{n+1}},
\]
where $z\cdot\bar{w}=z_1\ov{w}_1+\cdots+z_n\ov{w}_n$. The weight is
\[
\mu_{\mbb B^n,d}(z)= K_{\mbb B^n}^{-d}(z)=\left(\frac{\pi^{n}}{n!}\right)^d (1-\left\vert z\right\rvert^2)^{nd+d}, \quad z\in\mbb B^n
\]
and a computation shows that the weighted Bergman kernel $K_{\mathbb{B}^n,d}$ for a non-negative integer $d$ is 
\[
K_{\mbb B^n,d}(z,w)=\binom{nd+d+n}{n}\,\frac{(n!)^{d+1}}{\pi^{n(d+1)}}\,\frac{1}{(1-z\cdot \bar{w})^{(n+1)(d+1)}}\quad \quad\text{for } z,w\in\mathbb{B}^n.
\]

\medskip

Let $f : (\mbb D, ds^2_d) \ra (\mbb D, ds^2_{d_1}) \times (\mbb B^n, ds^2_{d_2})$ be a holomorphic isometry, where $d, d_1, d_2$ are non-negative integers and $ds^2_{d_2}=ds^2_{\mbb B^n,d_2}$. Reparametrize $f$ so that $f(0) = 0$. We will write $K_{d}$, $K_{d_1}$, $K_{d_2}$ instead of $K_{\mbb D,d}$, $K_{\mbb D,d_1}$ and $K_{\mbb B^n,d_2}$ respectively, for brevity. Let $f=(g,h)$ and $h=(h_1,\ldots, h_n)$. Then, $ds^2_{d_1}(g(z)) + ds^2_{d_2}(h(z))=ds^2_{d}(z)$. That is,
\begin{multline*}
\left(\frac{\partial^2}{\partial z\partial\bar{z}}\log K_{d_1}\right)(g(z))\, \vert g'(z)\vert^2 \,dz \otimes d\ov{z}
+
\sum_{i,j=1}^n \left(\frac{\partial^2}{\partial z_i\partial\bar{z}_j}\log K_{d_2}\right)(h(z)) \,h_i'(z)\ov{h_j'(z)} \,dz\otimes d\ov{z}\\
=
\frac{\partial^2}{\partial z\partial\bar{z}}\log K_{d}(z) \,dz \otimes d\ov{z}.
\end{multline*}
which is equivalent to 
\begin{equation}
\frac{\partial^2}{\partial z\partial\bar{z}}\log K_{d_1}(g(z))+\frac{\partial^2}{\partial z\partial\bar{z}}\log K_{d_2}(h(z))=\frac{\partial^2}{\partial z\partial\bar{z}}\log K_{d}(z), \quad z\in \mathbb{D}.
\end{equation}
By combining similar arguments in the proof of the first part of Theorem \ref{1.5} with the explicit expression for $K_{\mbb B^n,d}$, it follows that
\begin{equation}
\left(1-\left\lvert g(z)\right\rvert^2\right)^{2d_1+2}\left(1-\sum\limits_{i=1}^n\left\lvert h_i(z)\right\rvert^2\right)^{(n+1)(d_2+1)}=\left(1-\left\lvert z\right\rvert^2\right)^{2d+2}.
\end{equation}

On expanding as a power series and comparing the coefficients of $\left\lvert z\right\rvert^2$ on both sides of the above equation, we get
\begin{equation}
2(d_1+1)\left\lvert g'(0)\right\rvert^2 + (n+1)(d_2+1)\left(\sum_{i=1}^n\left\lvert h_i'(0)\right\rvert^2\right)=2(d+1).
\end{equation}
Since $g:\mathbb{D}\rightarrow\mathbb{D}$ and $h:\mbb D\ra \mbb B^n$ are  holomorphic maps, it follows by the Schwarz lemma that $\vert g'(0)\vert\leq 1$ and $\sum_{i=1}^n\left\lvert h_i'(0)\right\rvert^2 \leq 1$. Thus,
\[
2(d+1)\leq 2(d_1+1)+(n+1)(d_2+1).
\]
Here, equality holds if and only if $\vert g'(0)\vert=1$ and
$\sum_{i=1}^n\left\lvert h_i'(0)\right\rvert^2=1$. By the Schwarz lemma, $\vert g'(0)\vert=1$ only if $g(z)=cz$ for some $\vert c \vert=1$.  Saying $\sum_{i=1}^n\left\lvert h_i'(0)\right\rvert^2=1$ is equivalent to saying that $h'(0)$ is an isometry of $\mathbb{C}$ into $\mathbb{C}^n$, in which case, the Schwarz lemma implies that $h(z)=h'(0)z=(h_1'(0) z,\ldots,h_n'(0)z)$ for all $z\in \mathbb{D}$. 
Hence, equality holds precisely when there exists a unimodular constant $c$ and a vector $(c_1,\ldots,c_n)\in\partial\mbb B^n$ such that $f(z)=(cz,c_1 z,\ldots,c_n z)$, after a possible reparametrization.

\section{Proof of Theorem \ref{1.8}}
Let $f=(f_1,\ldots,f_n):(\mathbb{D},ds^2_{c_1})\times\cdots\times(\mathbb{D},ds^2_{c_m})\rightarrow (\mathbb{D},ds^2_{d_1})\times\cdots\times(\mathbb{D},ds^2_{d_n})$ be a holomorphic isometry. Reparametrize $f$ so that $f(0)=0$. Set
\[
\mu_1(z_1, z_2, \ldots, z_m) =\mu_{c_1}(z_1) \cdot \mu_{c_2}(z_2) \cdots \mu_{c_m}(z_m)
\]
and
\[
\mu_2(z_1, z_2, \ldots, z_n) =\mu_{d_1}(z_1) \cdot \mu_{d_2}(z_2) \cdots \mu_{d_n}(z_n).
\]
Then, as before, $ds^2_{\mbb D^m,\mu_1}(z)=ds^2_{\mbb D^n,\mu_2}(f(z))$ and hence
\begin{equation}\label{pol iso}
    \partial\ov{\partial} \log K_{\mbb D^n,\mu_2}(f(z))=\partial\ov{\partial} \log K_{\mbb D^m,\mu_1}(z),\quad z\in\mbb D^m.
\end{equation}
By combining similar arguments in the proof of Theorem \ref{1.5} with the explicit expression for $K_{\mbb D, d}$, $d \ge 0$, it follows that
\begin{equation}\label{fxn pol}
\prod_{i=1}^n \left(1-\left\lvert f_i(z)\right\rvert^2\right)^{d_i+1}=\prod_{j=1}^m \left(1-\left\lvert z_j\right\rvert^2\right)^{c_j+1}.
\end{equation}

In particular, (\ref{fxn pol}) implies that $f$ must be a proper map. 

\medskip

Fix $(z_2,\ldots,z_m)\in\mathbb{D}^{m-1}$. From (\ref{fxn pol}), we also see that $z_1\mapsto f(z_1,\ldots,z_m)$ is a holomorphic isometry of $(\mathbb{D},ds^2_{c_1})$ into $(\mathbb{D},ds^2_{d_1})\times\cdots\times(\mathbb{D},ds^2_{d_n})$ for every $(z_2,\ldots,z_m)\in\mathbb{D}^{m-1}$. From 
\[
\prod_{i=1}^n \left(1-\left\lvert f_i(z_1,\ldots,z_m)\right\rvert^2\right)^{d_i+1}=\prod_{j=1}^m \left(1-\left\lvert z_j\right\rvert^2\right)^{c_j+1},
\]
we also see that this isometry does not preserve the origin, except when $z_2=\cdots=z_m=0$.

\begin{lem}\label{not iden zero}
Fix $i \le n$. If $\frac{\partial f_i}{\partial z_1} \not \equiv 0$, then
\[
f_i(0,z_2,\ldots,z_m)=0
\]
for all $(z_2,\ldots,z_m)\in\mathbb{D}^{m-1}$.
\end{lem}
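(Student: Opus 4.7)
The plan is to polarize the functional equation \eqref{fxn pol} into a useful \emph{linear} identity for the $f_k$'s, and then to exploit an equality case of the Cauchy--Schwarz inequality to obtain a clean dichotomy at the tangent level. Polarize \eqref{fxn pol} in the standard way to
\[
\prod_{k=1}^n (1-f_k(z)\overline{f_k(w)})^{d_k+1} = \prod_{j=1}^m (1-z_j\bar w_j)^{c_j+1},\quad z,w\in\mathbb{D}^m.
\]
Taking $\partial/\partial z_1$ of the logarithm at $z=0$ (where every factor on the left equals $1$) and conjugating gives
\[
\sum_{k=1}^n (d_k+1)\bar A_k\, f_k(w) = (c_1+1)w_1,\qquad w\in\mathbb{D}^m,
\]
where $A_k := \partial_1 f_k(0)$.

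Differentiating this linear identity in $w_1$ and evaluating at $w=(0,w')$ with $w'\in\mathbb{D}^{m-1}$ produces $\sum_k (d_k+1)\bar A_k\,\partial_1 f_k(0,w')=c_1+1$. Combined with the baseline identity $\sum_k(d_k+1)|A_k|^2=c_1+1$ from the $(1,1)$-entry of the isometry tensor at $z=0$, the weighted Cauchy--Schwarz inequality forces
\[
c_1+1 \le \sum_k (d_k+1)|\partial_1 f_k(0,w')|^2,
\]
with equality iff $\partial_1 f_k(0,w')$ is a scalar multiple of $A_k$. But the $(1,1)$-entry of the isometry tensor at the point $(0,w')$ reads
\[
\sum_k (d_k+1)\frac{|\partial_1 f_k(0,w')|^2}{(1-|g_k(w')|^2)^2} = c_1+1,\quad g_k(w'):=f_k(0,w'),
\]
and since $(1-|g_k|^2)^{-2}\ge 1$ term-by-term, this forces the reverse inequality. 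The two bounds therefore coincide, pinning $\partial_1 f_k(0,w')=A_k$ for every $k$ and $w'$, and also forcing $|A_k|^2\big((1-|g_k(w')|^2)^{-2}-1\big)=0$ for each $k$ and each $w'$. The upshot is the dichotomy: \emph{either $A_k=0$, or $g_k\equiv 0$}.

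To upgrade this from a statement about $A_i=\partial_1 f_i(0)$ to one about $\partial_1 f_i$ not vanishing identically, reparametrize. For any $z_0=(z_1^0,z_0')\in\mathbb{D}^m$ with image $p_0:=f(z_0)$, compose $f$ with the product M\"obius automorphisms of $\mathbb{D}^m$ and $\mathbb{D}^n$ sending $z_0$ and $p_0$ to the respective origins. The reparametrized map $\tilde f$ is again a holomorphic isometry between the same weighted polydiscs and satisfies $\tilde f(0)=0$; a short chain-rule computation gives $\tilde A_k=\frac{1-|z_1^0|^2}{1-|p_0^{(k)}|^2}\partial_1 f_k(z_0)$, while $\tilde g_k\equiv 0$ iff $z'\mapsto f_k(z_1^0,z')$ is constant on $\mathbb{D}^{m-1}$. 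Applying the dichotomy to $\tilde f$ then shows that whenever $\partial_1 f_i(z_0)\neq 0$ at some $z_0$, the slice $f_i(z_1^0,\cdot)$ is constant. If $\partial_1 f_i\not\equiv 0$, the set where $\partial_1 f_i\neq 0$ contains an open product $U_1\times U'\subset\mathbb{D}^m$, so $\partial_j f_i\equiv 0$ on $U_1\times\mathbb{D}^{m-1}$ for every $j\ge 2$, and by analytic continuation $\partial_j f_i\equiv 0$ on all of $\mathbb{D}^m$. Hence $f_i$ depends only on $z_1$, which together with $f(0)=0$ gives $f_i(0,z')\equiv 0$. The main step I expect to be delicate is the Cauchy--Schwarz equality case, where the \emph{linear} identity from polarization and the \emph{quadratic} identity from the isometry tensor are forced to agree, pinning down both $\partial_1 f_k(0,w')$ and the decisive dichotomy simultaneously.
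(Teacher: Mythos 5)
Your argument is correct, but it takes a genuinely different route from the paper's. The paper proves the lemma by freezing $(z_2,\ldots,z_m)$, extending the graph of the slice map $z_1\mapsto f(z_1,z_2,\ldots,z_m)$ to an irreducible projective curve $V^1\subset\mathbb{P}^1\times(\mathbb{P}^1)^n$ as in Proposition \ref{extension of graph}, showing via the polarized functional equation on the branches that the $i$-th coordinate can become infinite only over $z_1=\infty$, deducing $f_i(\infty,z_2,\ldots,z_m)=\infty$ from non-constancy of the slice, and then reading off $f_i(0,z_2,\ldots,z_m)=0$ from the invariance of the defining equation (\ref{V_i}) under $(z,h)\mapsto(1/\bar z,1/\bar h)$. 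You instead stay entirely inside the polydisc: polarizing (\ref{fxn pol}) and taking the first-order jet in $z_1$ at the origin yields the linear identity $\sum_k(d_k+1)\overline{A_k}\,f_k(w)=(c_1+1)w_1$, and playing its $w_1$-derivative at $(0,w')$ against the $(1,\bar 1)$-component of the isometry tensor at $(0,w')$ squeezes a Cauchy--Schwarz inequality into an equality; the equality case pins down $\partial_1 f_k(0,w')=A_k$ and forces, for each $k$, either $A_k=0$ or $f_k(0,\cdot)\equiv 0$. I checked the jet computations and the squeeze: they are correct (note that $\sum_k(d_k+1)|A_k|^2=c_1+1>0$ guarantees the reference vector is nonzero, so the equality case is legitimately applicable). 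The remaining steps --- transporting the base point by M\"obius reparametrization, which preserves the class of isometries by (\ref{Trans formula}), and then the identity theorem on $U_1\times\mathbb{D}^{m-1}$ --- are routine and correctly executed. Your route is more elementary, needing no variety extension and no branch counting at infinity, and it in fact proves more than the lemma asserts: it shows directly that $\partial_1 f_i\not\equiv 0$ forces $f_i$ to depend on $z_1$ alone, which, applied to each variable in turn, essentially yields the splitting statement of Theorem \ref{1.8} without Lemma \ref{family of varieties}. What the paper's approach buys in exchange is uniformity with the machinery already built for Theorem \ref{1.5}, where the extension to $\mathbb{P}^1\times(\mathbb{P}^1)^n$ is needed in any case.
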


\begin{proof}

Fix $(z_2,\ldots,z_m)\in\mathbb{D}^{m-1}$. The map $F^1:z_1\mapsto f(z_1,\ldots,z_m)$ is an isometric embedding with component functions $f_l$ satisfying
\[
\prod_{l=1}^n \left(1-\left\lvert f_l(z_1,\ldots,z_m)\right\rvert^2\right)^{d_l+1}=a\left(1-\left\lvert z_1\right\rvert^2\right)^{c_1+1}
\]
for some constant $a>0$. We can argue as in the second half of the proof of Theorem \ref{1.5}. Let $V^1$ denote the irreducible one-dimensional projective analytic subvariety in $\mathbb{P}^1\times(\mathbb{P}^1)^n$ extending the graph of $F^1$ and $\pi$ be the projection of $V^1$ onto the first coordinate. There exists a discrete set $A\subset\mathbb{P}^1$ such that $\pi$, when restricted to $V^1\setminus\pi^{-1}(A)$, is a holomorphic covering with $s$ branches.

\medskip

Suppose $(z_1,a_1,\ldots,a_n)$ is a point on $V^1$ with $z_1\in\mathbb{C}$ and $a_i=\infty$. Let $U\subset\mathbb{C}\setminus \left(A\cup\{z_1\}\right)$ be an open set such that $\pi^{-1}(U)=\bigcup_{l=1}^s W_l$, where $W_l$'s are disjoint. Take any of the $W_l$'s, say $W_1$. For all $(w,f_1,\ldots,f_n)\in W_1$, we have
\[
\prod_{l=1}^n\left(1-a_l\bar{f_l}(w,z_2,\ldots,z_m)\right)^{d_l+1}=a(1-z_1\bar{w})^{c_1+1}.
\]
Since the right-hand side is always finite and $a_i=\infty$, either $f_i\equiv 0$ or $(1-a_j\overline{f_j})\equiv 0$ on $U$ for some $j$. Since $\partial f_i/\partial z_1$ is not identically zero, the former cannot be true. The latter possibility is also ruled out as the right side is not identically zero. This is a contradiction.
Therefore, $f_i(z_1,\ldots,z_m)$, as a function of $z_1$, can take the value infinity only at $z_1=\infty$. Since $f_i$, as a function of $z_1$, is not identically constant, the projection of the analytic subvariety corresponding to the component function $f_i$ onto the second coordinate is all of $\mathbb{P}^1$. Therefore, $f_i(\infty,z_2,\ldots,z_m)=\infty$ for all $(z_2,\ldots,z_m)\in\mbb D^{m-1}$. Since (\ref{V_i}) is invariant under the involution $(z, h) \mapsto (1/\ov z, 1/ \ov h)$, it follows that $f_i(0,z_2,\ldots,z_m)=0$ for all $(z_2,\ldots,z_m)\in\mbb D^{m-1}$. 
\end{proof}

\begin{lem}\label{family of varieties}
Let $W\subset\mathbb{C}$ be an open set and $(f_{w,1}(z),f_{w,2}(z),\ldots,f_{w,n}(z))$ a family of holomorphic functions on $\mathbb{D}$ depending holomorphically on $w\in W$ such that
\[
\prod_{i=1}^n \left( 1-\left\lvert f_{w,i}\right\rvert^2\right)^{a_i+1} = \left(1-\left\lvert z\right\rvert^2\right)^{k+1}
\]
where $a_i$'s and $k$ are non-negative integers. Then, for all $i$, $\frac{\partial f_{w,i}}{\partial w}\equiv 0$.
\end{lem}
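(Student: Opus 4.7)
The plan is to take the logarithm of the functional equation and apply $\partial_w\partial_{\bar w}$ to both sides, viewing them as functions on $\mathbb{D}\times W$. Logarithmically, the hypothesis reads
\[
\sum_{i=1}^n (a_i+1) \log\bigl(1-|f_{w,i}(z)|^2\bigr) = (k+1)\log(1-|z|^2),
\]
whose right-hand side is independent of $w$ and hence is annihilated by $\partial_w\partial_{\bar w}$.

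The key computation is that, because $f_{w,i}(z)$ depends holomorphically on $w$, the standard calculation that produces the Poincar\'e curvature from $\log(1-|z|^2)$ applies verbatim in the $w$-variable and yields
\[
\partial_w\partial_{\bar w}\log\bigl(1-|f_{w,i}|^2\bigr) = -\frac{|\partial_w f_{w,i}|^2}{(1-|f_{w,i}|^2)^2}.
\]
Summing this against the coefficients $a_i+1$ and using that the right-hand side of the functional equation is $w$-independent, one obtains
\[
\sum_{i=1}^n (a_i+1)\frac{|\partial_w f_{w,i}(z)|^2}{(1-|f_{w,i}(z)|^2)^2} = 0
\]
throughout $\mathbb{D}\times W$. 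Each summand is non-negative and the coefficients $a_i+1$ are positive, so every summand must vanish; since $1-|f_{w,i}|^2>0$ on $\mathbb{D}\times W$, this forces $\partial_w f_{w,i}\equiv 0$ for every $i$, which is exactly the claim.

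I do not foresee any real obstacle. The argument uses only the positivity built into the Poincar\'e-type computation, and the hypothesis that $w\mapsto f_{w,i}$ is holomorphic is precisely what makes the mixed derivative in the $w$-variable reproduce the usual formula from the $z$-variable. In contrast to earlier arguments in this section, no appeal to the extension variety $V$ or to boundary behaviour is required.
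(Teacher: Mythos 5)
Your proof is correct, and it takes a genuinely different route from the paper's. The paper deduces this lemma from the extension-of-graph machinery of Proposition \ref{extension of graph}: for each $w$ the graph of $F_w=(f_{w,1},\ldots,f_{w,n})$ extends to a projective subvariety of $\mathbb{P}^1\times(\mathbb{P}^1)^n$ cut out by relations coming only from the ($w$-independent) functional equation, so the variety, and hence $F_w$, does not depend on $w$. You instead differentiate the logarithm of the functional equation in the parameter: since $w\mapsto f_{w,i}(z)$ is holomorphic, the standard Poincar\'e-type computation gives $\partial_w\partial_{\bar w}\log\bigl(1-|f_{w,i}|^2\bigr)=-|\partial_w f_{w,i}|^2/(1-|f_{w,i}|^2)^2$ (your formula checks out), and the vanishing of the $w$-Laplacian of the $w$-independent right-hand side forces each term of the resulting sum of nonnegative quantities to vanish. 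This is more elementary and self-contained --- it sidesteps the somewhat implicit claim that the variety constructed in Proposition \ref{extension of graph} is canonical and parameter-independent --- though it yields only the statement of the lemma rather than the stronger geometric fact about the extension variety. Two small points worth making explicit: you need $1-|f_{w,i}(z)|^2>0$ for the logarithms to be defined, which holds because the functional equation forces each factor to be nonvanishing and in the intended application the $f_{w,i}$ take values in $\mathbb{D}$; and the lemma is actually invoked with a multi-dimensional parameter $(z_2,\ldots,z_m)$, but your argument (like the stated lemma) extends to that case immediately, e.g.\ one variable at a time.
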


\begin{proof}
For every $w\in W$, let $F_w$ denote the isometry with component functions $f_{w,i}$, $1\leq i\leq n$. We can determine the analytic subvariety extending the graph of $F_w$, in the same way as in Proposition \ref{extension of graph}. The proof of this proposition shows that the analytic subvariety is independent of $w$. Therefore, $F_w$ is also independent of $w$. Hence, $\frac{\partial f_{w,i}}{\partial w}\equiv 0$ for all $i$.
\end{proof}

By Lemma \ref{not iden zero}, if for any $i\leq n$, $\frac{\partial f_i}{\partial z_1}$ is not constantly zero, then $f_i(0,z_2,\ldots,z_m)=0$ for all $(z_2,\ldots,z_m)\in\mathbb{D}^{m-1}$. It follows from the functional equation
\begin{equation}\label{pol fxn eqn 1}
\prod_{i=1}^n \left(1-\left\lvert f_i(z_1,\ldots,z_m)\right\rvert^2\right)^{d_i+1}=\prod_{j=1}^m \left(1-\left\lvert z_j\right\rvert^2\right)^{c_j+1},
\end{equation}
that we can not have $f_i(0,z_2,\ldots,z_m)=0$ for all $i\le n$ and $(z_2,\ldots,z_m)\in\mathbb{D}^{m-1}$. After rearranging the component indices, if necessary, we can assume that there exists a postive integer $n_1$ such that 
\[
\frac{\partial f_i}{\partial z_1}\equiv 0 \quad\text{ for all }\quad n_1+1\leq i\leq n.
\]
Then, we can write $f_i(z_1,z_2,\ldots,z_m)=f_i(z_2,\ldots,z_m)$ for all $n_1+1\leq i\leq n$. On substituting $z_1=0$ in (\ref{pol fxn eqn 1}), we get
\begin{equation}\label{pol fxn eqn 2}
\prod_{i=n_1+1}^n \left(1-\left\lvert f_i(z_2,\ldots,z_m)\right\rvert^2\right)^{d_i+1}=\prod_{j=2}^m \left(1-\left\lvert z_j\right\rvert^2\right)^{c_j+1}.
\end{equation}
Dividing (\ref{pol fxn eqn 1}) by (\ref{pol fxn eqn 2}), we obtain
\[
\prod_{i=1}^{n_1} \left(1-\left\lvert f_i(z_1,\ldots,z_m)\right\rvert^2\right)^{d_i+1}=\left(1-\left\lvert z_1\right\rvert^2\right)^{c_1+1}.
\]
So, we now have a holomorphic family (with parameters $z_2,\ldots,z_m$) of holomorphic functions satisfying the above functional equation in $z_1$. It follows from Lemma \ref{family of varieties}, that for all $1\leq i\leq n_1$, $f_i$ is a function of $z_1$ only. Note that $(f_1,\ldots,f_{n_1}):(\mbb D,ds^2_{c_1})\rightarrow (\mbb D, ds^2_{d_1})\times\cdots\times(\mbb D, ds^2_{d_{n_1}})$ is a holomorphic isometry.
We can now argue in the same way for the variable $z_2$ in (\ref{pol fxn eqn 2}). By induction, this completes the proof of the first part of the theorem.

\medskip

It now follows from Theorem \ref{1.5} that for all $1\leq i\leq m$, 
\[
c_i+1\leq \sum_{k=n_1+\cdots+n_{i-1}+1}^{n_1+\cdots+n_{i}}(d_k+1)
\]
and equality holds precisely when $f_k= a_k z_i$ for all $n_1+\cdots+n_{i-1}+1\leq k \leq n_1+\cdots+n_{i}$, where the $a_k$'s are unimodular constants.

\section{Appendix}\label{Appendix}

\noindent Let $d_1, d_2, \ldots, d_n$ be an $n$-tuple of non-negative integers. As before, let $\mu_{d_i}$ denote the weight $K_{\mbb D}^{-d_i}$ on $\mbb D$ and $K_{\mbb D, d_i}$ the weighted Bergman kernel corresponding to the weight $\mu_{d_i}$. Consider the polydisc $\mbb D^n$ endowed with the weight
\[
\mu(z) = \mu(z_1, z_2, \ldots, z_n) = \mu_{d_1}(z_1) \cdot \mu_{d_2}(z_2) \cdots \mu_{d_n}(z_n)
\]
which is the point-wise product of the weights in each factor. Note that each $\mu_{d_i}$ is an admissible weight on $\mbb D$, and the explicit prescription of $\mu$ shows that it is also an admissible weight on $\mbb D^n$.

\medskip

Using the reproducing property, it can be seen that
\[
K_{\mbb D^n, \mu}((z_1, \ldots, z_n), (\z_1, \ldots, \z_n)) = K_{\mbb D, d_1}(z_1, \z_1) \cdots K_{\mbb D, d_n}(z_n, \z_n) 
\]
for $(z_1, \ldots, z_n), (\z_1, \ldots, \z_n) \in \mbb D^n$. In other words, the multiplicative property of the Bergman kernel holds for this class of weights on $\mbb D$. We do not know whether this is true for general admissible weights on arbitrary domains. 

\medskip

What this means for the situation in the proof of second part of Theorem \ref{1.5}  is that the isometry condition  on the germ
\[
g:(\mathbb{D},ds^2_d;x_0)\rightarrow (\mathbb{D},ds^2_{d_1};y_{01})\times\cdots\times(\mathbb{D},ds^2_{d_n};y_{0n})
\]
which can be a priori written as
\[
ds^2_d(z) = ds^2_{d_1}(g_1(z)) + \ldots + ds^2_{d_n}(g_n(z))
\]
or equivalently
\[
\pa \ov \pa \log K_{\mbb D, d}(z) = \pa \ov \pa \log K_{\mbb D, d_1}(g_1(z)) + \ldots + \pa \ov \pa \log K_{\mbb D, d_n}(g_n(z))
\]
for $z \in \mbb D$ close to $x_0$, can now be translated as
\begin{eqnarray*}
\partial\bar{\partial}\log K_{\mathbb{D},d}(z)
&=&\partial\bar{\partial}\left( \sum_{i=1}^n \log K_{\mathbb{D},d_i}(g_i)\right)
= 
\partial\bar{\partial} \log \left( \prod_{i=1}^n K_{\mathbb{D},d_i}(g_i)\right)\\
&=&
\partial\bar{\partial}\log K_{\mathbb{D}^n,\mu}(g),
\end{eqnarray*}
where $\mu(z_1,\ldots,z_n) = \prod_{i=1}^n\mu_{d_i}(z_i)$ is an admissible weight on $\mathbb{D}^n$. Therefore, 
\[
g :(\mathbb{D},ds^2_d;x_0)\rightarrow(\mathbb{D}^n,ds^2_{\mu};y_0)
\]
is a germ of a holomorphic isometry.

\medskip

In order to see that the germ $g$ of an isometric embedding of the unit disc into a polydisc can be extended over the whole unit disc, first apply Kobayashi's embedding of a domain $\Om \subset \mbb C^n$ isometrically in the projectivization of a separable Hilbert space -- the proof from \cite{Kob} works verbatim in the weighted case and we merely state the result that can be used.

\begin{thm}\label{FS}
Let $\Omega\subset\mathbb{C}^n$ be a domain and $\mu$ be an admissible weight on $\Omega$. Let $H=\mathcal{O}_{\mu}(\Omega)$ and $H^*$ be its dual space, i.e., the space of all the linear functionals on $H$. Assume that the weight $\mu$ is such that
\begin{enumerate}
\item[A1.] For every $z_1,z_2\in\Omega$, there exists an $f\in H$ such that $f(z_1)=0$ and $f(z_2)\neq 0$.
\item[A2.] For every $z\in\Omega$ and $X\in\mathbb{C}^n$, there exists an $f\in H$ such that $f(z)=0$ and $\vert f'(z)\cdot X\vert\neq 0$.
\end{enumerate}
Let $P(H^*)$ denote the projectivization of the separable Hilbert space $H^{*}$.
Then there exists a canonical complex analytic isometrical imbedding $\Phi:(\Omega,ds^2_{\Omega,\mu})\rightarrow (P(H^*),ds^2_{FS})$, where $ds^2_{FS}$ is the Fubini-Study metric.
\end{thm}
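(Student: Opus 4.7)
The plan is to adapt Kobayashi's construction from \cite{Kob} verbatim to the weighted setting. First, fix an orthonormal basis $\{s_i\}_{i=0}^{\infty}$ of $H = \mathcal{O}_{\mu}(\Omega)$. Since $\mu$ is admissible, the evaluation functional $\mathrm{ev}_z : f \mapsto f(z)$ lies in $H^*$ for each $z \in \Omega$, and with respect to the dual basis it corresponds to $(s_0(z), s_1(z), \ldots) \in \ell^2$. Condition A1, applied with $z_2 = z$ and any $z_1 \neq z$, produces an $f \in H$ with $f(z) \neq 0$; hence $\mathrm{ev}_z \neq 0$ and $K_{\Omega,\mu}(z) = \sum_{i=0}^{\infty} |s_i(z)|^2 > 0$ on all of $\Omega$. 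Then define
\[
\Phi(z) := [\mathrm{ev}_z] = [s_0(z) : s_1(z) : \ldots],
\]
which is a holomorphic map $\Phi : \Omega \to P(H^*)$.

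Next I would verify injectivity and the immersion property using A1 and A2 respectively. If $\Phi(z_1) = \Phi(z_2)$, then $\mathrm{ev}_{z_1} = \lambda\, \mathrm{ev}_{z_2}$ for some $\lambda \in \mbb C$; picking $f$ by A1 with $f(z_1) = 0$, $f(z_2) \neq 0$ forces $\lambda = 0$ and hence $\mathrm{ev}_{z_1} = 0$, contradicting positivity of $K_{\Omega,\mu}$. Locally near a point $z_0$, reorder the basis so $s_0(z_0) \neq 0$; in the affine chart $\{s_0 \neq 0\} \subset P(H^*)$ the map $\Phi$ reads as $z \mapsto \bigl(s_1(z)/s_0(z), s_2(z)/s_0(z), \ldots\bigr)$. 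If $d\Phi(z_0) X = 0$ for some nonzero $X \in \mbb C^n$, then the linear functional $f \mapsto f'(z_0) \cdot X$ is proportional to $\mathrm{ev}_{z_0}$ on $H$; the $f$ supplied by A2 with $f(z_0) = 0$ and $f'(z_0) \cdot X \neq 0$ then yields a contradiction.

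The heart of the argument is verifying $\Phi^* ds^2_{FS} = ds^2_{\Omega,\mu}$. In the affine chart above,
\[
\Phi^*\, ds^2_{FS} \;=\; \pa \ov{\pa}\, \log\!\left(1 + \sum_{i \ge 1} \left|\frac{s_i(z)}{s_0(z)}\right|^2\right) \;=\; \pa \ov{\pa} \log \frac{K_{\Omega,\mu}(z)}{|s_0(z)|^2} \;=\; \pa \ov{\pa}\, \log K_{\Omega,\mu}(z),
\]
where the last equality uses pluriharmonicity of $\log |s_0(z)|^2$ on $\{s_0 \neq 0\}$, and the right-hand side is $ds^2_{\Omega,\mu}$ by definition. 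The embedding is canonical in that a different choice of orthonormal basis of $H$ induces a unitary transformation of $H^*$, which descends to an isometry of $(P(H^*), ds^2_{FS})$ under which $\Phi$ is unchanged.

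The main obstacle is the careful justification of the infinite-dimensional manipulations: the holomorphicity of $\Phi$ as a map into the projectivization of a separable Hilbert space, termwise differentiation of the series defining $K_{\Omega,\mu}$ when applying $\pa \ov{\pa} \log$, and the freedom to reorder the orthonormal basis locally so that $s_0$ does not vanish at the base point. Each of these follows from admissibility of $\mu$, which renders point evaluation bounded and causes the series $\sum |s_i(z)|^2$ to converge locally uniformly on compact subsets of $\Omega$, combined with the separating hypothesis A1. This is the one place where the weighted case could in principle differ from Kobayashi's original argument, but since admissibility supplies exactly the boundedness required at each step, the proof transfers without modification.
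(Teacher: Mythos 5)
Your proposal is correct and follows exactly the route the paper intends: the paper gives no written proof of this theorem, stating only that Kobayashi's construction from \cite{Kob} carries over verbatim to the weighted setting, and your argument is precisely that construction --- the evaluation-functional embedding $\Phi(z)=[\mathrm{ev}_z]$, injectivity from A1, the immersion property from A2, and the pullback computation $\pa\ov\pa\log\bigl(K_{\Omega,\mu}/|s_0|^2\bigr)=\pa\ov\pa\log K_{\Omega,\mu}$ using pluriharmonicity of $\log|s_0|^2$. The analytic points you flag (local uniform convergence of $\sum_i|s_i|^2$ and termwise differentiation, both supplied by admissibility of $\mu$) are indeed the only places where the weighted case could differ from \cite{Kob}, and they go through as you say.
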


It is not difficult to see that both conditions listed above are satisfied in the case of the product weight $\mu$ on $\mbb D^n$ by using its explicit expression. 

\medskip

In Theorem \ref{FS}, take $\Omega=\mathbb{D}^n$ and $\mu$ as before. Since composition of two isometries is an isometry, the map $\Phi\circ g:(\mathbb{D},ds^2_d;x_0)\rightarrow(P(H^{*}),ds^2_{FS};\Phi(y_0))$ is a local holomorphic isometry. Since $ds^2_d$ is a real-analytic K\"{a}hler metric, it follows by Calabi's theorem \cite{Cb} that $\Phi\circ g$ admits an extension to a holomorphic isometry $\Psi:\mathbb{D}\rightarrow P(H^*)$. 

\medskip

Analytically continuing $\Phi\circ g$ gives an analytic continuation of $Graph(g)$ (which is locally defined near $x_0$ as a germ to begin with) to a complex-analytic subvariety of $\mathbb{D}\times\mathbb{D}^n$, as any point $(z,a_1,\ldots,a_n)$ in the anaytic continuation of $g$ along a loop satisfies
\[
(1-\vert z\vert^2)^{d+1}=\prod_{i=1}^n(1-\vert a_i\vert^2)^{d_i+1}.
\]
Since $(\mathbb{D}^n,\mu)$ is complete and isometrically embedded in $P(H^*)$ via $\Phi$, $\Psi(\mathbb{D})\subset \Phi(\mathbb{D}^n)$. This gives the existence of a holomorphic isometry $F:(\mathbb{D},ds^2_d)\rightarrow (\mathbb{D},ds^2_{d_1})\times \cdots\times (\mathbb{D},ds^2_{d_n})$ that extends the germ of $f$.

\medskip

Going back to the proof of Theorem $\ref{1.5}$, let $(z_0,a_1,\ldots,a_n)\in V\setminus\pi^{-1}(A)$. Choose a neighborhood $U_0$ of $z_0$ lying in $\mathbb{D}$ or $\mathbb{P}^1\setminus\ov{\mathbb{D}}$ depending upon whether $z_0\in\mathbb{D}$ or $z_0\in\mathbb{P}^1\setminus\ov{\mathbb{D}}$. There exist holomorphic functions $h_i$ on $U_0$ with $(z,h_1(z),\cdots,h_n(z))\in V$ for all $z\in U_0$. Since
\[
\prod_{i=1}^n \left( 1-\vert h_i(z)\vert^2\right)^{d_i+1}=\left(1-\vert z\vert^2\right)^{d+1},
\]
either $h_i(U_0)\subset\mathbb{D}$ or $h_i(U_0)\subset\mathbb{P}^1\setminus \ov{\mathbb{D}}$ for $1\leq i\leq n$. We now study the two cases separately.

\begin{proof}[Case 1.] Let $z_0\in\mathbb{D}$.
Define holomorphic functions $g_i$ on $U_0$ as follows
\[
g_i(z)=
\begin{cases}
h_i(z)&\text{if }\,
h_i(U_0)\subset \mathbb{D}\\
1/h_i(z)&\text{if }\,
h_i(U_0)\subset\mathbb{P}^1\setminus\ov{\mathbb{D}}
\end{cases}.
\]
Assume that only $h_1(U_0)\subset\mathbb{P}^1\setminus\ov{\mathbb{D}}$. Note that $d_1$ must be an odd. So, we obtain
\[
\sum_{i=1}^n (d_i+1)\log(1-\vert g_i(z)\vert^2)-(d_1+1)\log \vert g_1(z)\vert^{2}=(d+1)\log (1-\vert z\vert^2).
\]
Applying $\partial\bar{\partial}$ to both sides, the second term on the left hand side vanishes and we get
\[
\sum_{i=1}^n(d_i+1)\frac{\vert g'_i(z)\vert^2}{(1-\vert g_i(z)\vert^2)^2}=\frac{d+1}{(1-\vert z\vert^2)^2},
\]
which is same as saying
\[
\sum_{i=1}^n\partial\bar{\partial}\log K_{\mathbb{D},d_i}(g_i)=\partial\bar{\partial}\log K_{\mathbb{D},d}(z).
\]
Other cases can be handled similarly to see that $(g_1,\ldots,g_n)$ is a germ of a holomorphic isometry and hence can be extended to a holomorphic isometry over the whole unit disc. Thus, we also have the extension of the functions $h_i$ over the whole unit disc such that $(z,h_1(z),\ldots,h_n(z))\in V$ for all $z\in\mathbb{D}$.
\end{proof}

\begin{proof}[Case 2.] Let $z_0\in \mathbb{P}^1\setminus\ov{\mathbb{D}}$.
Let $V_0=\{z\in\mathbb{D}: 1/z\in U_0\}$. Define holomorphic functions $g_i$ on $V_0$ as follows
\[
g_i(z)=
\begin{cases}
h_i(1/z)&\text{if }\,
h_i(U_0)\subset \mathbb{D}\\
1/h_i(1/z)&\text{if }\,
h_i(U_0)\subset\mathbb{P}^1\setminus\ov{\mathbb{D}}
\end{cases}.
\]
Assume that only $h_1(U_0)\subset\mathbb{P}^1\setminus\ov{\mathbb{D}}$. Note that both $d_1$ and $d$ must be either odd or even. 
For all $z\in V_0$, we therefore obtain
\[
\sum_{i=1}^n (d_i+1)\log(1-\vert g_i(z)\vert^2)-(d_1+1)\log \vert g_1(z)\vert^{2}=(d+1)\log (1-\vert z\vert^2)-(d+1)\log \vert z\vert^{2}.
\]
Applying $\partial\bar{\partial}$ to both sides, the second term on both the sides vanish and we get
\[
\sum_{i=1}^n(d_i+1)\frac{\vert g'_i(z)\vert^2}{(1-\vert g_i(z)\vert^2)^2}=\frac{d+1}{(1-\vert z\vert^2)^2},
\]
which is same as saying
\[
\sum_{i=1}^n\partial\bar{\partial}\log K_{\mathbb{D},d_i}(g_i)=\partial\bar{\partial}\log K_{\mathbb{D},d}(z).
\]
Other cases can be handled similarly to see that $(g_1,\ldots,g_n)$ is a germ of a holomorphic isometry and hence can be extended to a holomorphic isometry over the whole unit disc. Thus, we also have the extension of the functions $h_i$ over the whole $\mathbb{P}^1\setminus\ov{\mathbb{D}}$ such that $(z,h_1(z),\ldots,h_n(z))\in V$ for all $z\in\mathbb{P}^1\setminus\ov{\mathbb{D}}$. 

\end{proof}
\section{Concluding Remarks and Questions}

Theorems $\ref{1.5}, \ref{1.6}, \ref{1.7}$ and $\ref{1.8}$ all deal with isometries with respect to the weighted metrics $ds^2_{\mbb D, d}$. This metric provides a canonical class of metrics that give rise to conformal constants in the functional equations that arise from the isometry condition. In the same way, it would be interesting to explore a weighted analog of \cite{Ng1} that deals with isometries from the ball into the product of two balls, all living in possibly different dimensions. Further, Mok--Ng \cite{MokNg} and Yuan \cite{Yuan} have also considered germs of measure preserving holomorphic maps. Obtaining analogs of these results in the context of measures arising from the volume forms associated with $ds^2_{D, d}$ also seems to be interesting. 

\medskip



\end{document}